\numberwithin{equation}{section}
\newtheorem{theorem}{Theorem}[section]
\newtheorem{lemma}[theorem]{Lemma}
\newtheorem{proposition}[theorem]{Proposition}
\newtheorem{definition}[theorem]{Definition}
\newtheorem{example}[theorem]{Example}
\newtheorem{remark}[theorem]{Remark}
\newtheorem{notation}[theorem]{Notation}
\begin{document}

\title[Laurent polynomials]{A filtration on the ring of Laurent polynomials
and representations of the general linear Lie algebra}

\author{Cheonho Choi}
\address{Department of Mathematics, Korea University,
 145 Anam-ro Seongbuk-gu, Seoul 02841, South Korea}
\email{cheonho@korea.ac.kr}

\author{Sangjib Kim}
\address{Department of Mathematics, Korea University,
 145 Anam-ro Seongbuk-gu, Seoul 02841, South Korea}
\email{sk23@korea.ac.kr}
 
\author{HaeYun Seo}
\address{Department of Mathematics, University of Maryland, 
William E. Kirwan Hall, 4176 Campus Drive, College Park, MD 20742-4015, USA}
\email{hys2500@terpmail.umd.edu}
 

\begin{abstract}
We first present a filtration on the ring $L_n$ of Laurent polynomials 
such that the direct sum decomposition of its associated graded ring $gr L_n$ 
agrees with the direct sum decomposition of $gr L_n$, as a module over 
the complex general linear Lie algebra $\mathfrak{gl}(n)$,
into its simple submodules. 
Next, generalizing the simple modules occurring in the associated graded ring $gr L_n$, 
we give some explicit constructions of weight multiplicity-free
irreducible representations of $\mathfrak{gl}(n)$.
\end{abstract}

\subjclass[2010]{16S34, 16W70, 17B10, 17B45}
\keywords{Laurent polynomial, Filtration, General linear Lie algebra, Weight module}

\maketitle


\section{Introduction}


In this section, we  give a brief summary of our results.

\subsection{The ring of polynomials}

The ring $P_n = \mathbb{C}[x_1, ..., x_n]$ of polynomials in $n$ indeterminates 
over the complex numbers $\mathbb{C}$ is a $\mathbb{Z}$-graded algebra
\begin{equation}\label{poly-decomp}
P_n = \bigoplus_{m \in \mathbb{Z}} P^{(m)}_n
\end{equation}
where $P^{(m)}_n$ is the space of homogeneous polynomials of degree $m$. 
As a vector space, $P_n$ becomes a module over
the complex general linear Lie algebra $\mathfrak{gl}(n)=\mathfrak{gl}_n(\mathbb{C})$ 
under the action
\begin{equation}\label{poly-action-Pn}
A \cdot f = \sum_{ij} a_{ij} x_i \frac{\partial f}{\partial x_j}
\quad
\text{for $A=(a_{ij}) \in \mathfrak{gl}(n)$ and $f \in P_n$}. 
\end{equation}
Then, the direct sum decomposition $\eqref{poly-decomp}$ of $P_n$ 
as a graded ring agrees with the decomposition of $P_n$ as 
a $\mathfrak{gl}(n)$-module into its simple submodules $P^{(m)}_n$.
They are the finite dimensional 
representations of $\mathfrak{gl}(n)$ labeled by Young diagrams 
with single rows.

\subsection{The ring of Laurent polynomials}

The first goal of this paper is to obtain 
an analogous result of the above observation for the ring of Laurent polynomials
\[
L_n = \mathbb{C}[x_1^{\pm 1}, x_2^{\pm 1}, ..., x_n^{\pm 1}]. 
\]
It turns out that a filtration and its associated graded structure 
give us an answer. Note that \eqref{poly-decomp} can be seen 
as the graded ring associated with the $\mathbb{Z}$-filtration 
of $P_n$ given by degree.

We will define a filtration on $L_n$ by a partially ordered monoid constructed from 
integers and subsets of $\{1, 2,..., n\}$
\[
L_n = \bigcup_{(m,J)\in \mathbb{Z} \times \mathscr{P}_n} L_n^{\leqslant (m, J)}
\]
and show that the direct sum decomposition of its associated graded ring 
\[
gr L_n = \bigoplus_{(m,J)\in \mathbb{Z} \times \mathscr{P}_n} 
           { L_n^{\leqslant (m, J)} / L_n^{< (m, J)} }
\]
provides the decomposition of $gr L_n$, as a $\mathfrak{gl}(n)$-module, 
into its simple submodules.

Extending the space with the action \eqref{poly-action-Pn} of $\mathfrak{gl}(n)$ 
from $P_n$ to $L_n$, we identify Laurent monomials 
$\mathbf{x}^{\mathbf{k}}=x_1^{k_1} x_2^{k_2} \cdots x_n^{k_n}$ 
with integral points $\mathbf{k}=(k_1, k_2 ..., k_n)$ in $\mathbb{R}^n$. 
Note that they are weight vectors with respect to the Cartan subalgebra of 
$\mathfrak{gl}(n)$ consisting of diagonal matrices.
Since this action preserves the degree of monomials, 
we can  focus on integral points on the hyperplane 
$k_1 + \cdots + k_n =m$ for each $m \in \mathbb{Z}$. 

One of main difficulties in studying the $\mathfrak{gl}(n)$-module structure of 
$L_n$ is that the symmetric behavior of raising and 
lowering operators we had when working with $P_n$ is not trivial anymore. 
For example, when $n=2$ as in Figure \ref{figure-01},
\[
\begin{bmatrix} 0 & 0 \\ 1 & 0 \end{bmatrix} \cdot \, 
	x_1^{k_1} x_2^{m - k_1} = k_1 x_1^{k_1 -1} x_2^{m+1- k_1} \text{\ \ and \ }
\begin{bmatrix} 0 & 1 \\ 0 & 0 \end{bmatrix} \cdot \, 
	x_1^{m- k_2} x_2^{k_2}  = k_2 x_1^{m+1 - k_2} x_2^{k_2 -1}. 
\]
The cases $k_1=0$ and $k_2=0$ divide the line $k_1 + k_2 =m$ into 
three parts.
The monomials with $k_1 \geq 0$ and $k_2 \geq 0$ can be obtained 
by applying some elements of $\mathfrak{gl}(2)$ to monomials 
with $k_1 k_2 < 0$. However, monomials with $k_1 k_2 < 0$ 
cannot be obtained from the ones with $k_1 \geq 0$ and $k_2 \geq 0$.

\begin{figure}
\centering
\begin{tikzpicture}[scale=1.1] 
\draw[->] (-3,0) -- (4,0); \foreach \x in {-3,-2,-1, 1 ,2,3}
\draw[shift={(\x,0)}] (0pt,2pt) -- (0pt,-2pt) node[gray, below] {\footnotesize $\x$};
\draw[->] (0,-3) -- (0,4); \foreach \y in {-3,-2,-1, 1 , 2 ,3}
\draw[shift={(0,\y)}] (2pt,0pt) -- (-2pt,0pt) node[gray, left] {\footnotesize $\y$};
\node[gray] at (0.15,-0.3) {\footnotesize $0$};
\node at (4.1,-0.3) {\footnotesize $k_1$};
\node at (0.3,3.7) {\footnotesize $k_2$};
\node[circle, fill=black, inner sep=0pt, minimum size=3.5pt] (p0) at (-2,3) {};
\node[circle, fill=black, inner sep=0pt, minimum size=3.5pt] (p1) at (-1,2) {};
\node[circle, fill=black, inner sep=0pt, minimum size=3.5pt] (p2) at (0,1) {};
\node[circle, fill=black, inner sep=0pt, minimum size=3.5pt] (p3) at (1,0) {};
\node[circle, fill=black, inner sep=0pt, minimum size=3.5pt] (p4) at (2,-1) {};
\node[circle, fill=black, inner sep=0pt, minimum size=3.5pt] (p5) at (3,-2) {};
\node (q0) at (5,-2) {};
\node (q2) at (5,-2.5) {};
\draw (-2.2, 3.2) -- (3.2, -2.2);             		 
\draw[red, ->, line width=0.4mm] (-2,3.3) -- ({0.1*sqrt(2) +1}, {-0.1*sqrt(2) + 0.3});
\draw[red, ->, line width=0.4mm] ({0.1*sqrt(2) +2},{-0.1*sqrt(2) -0.7}) -- ({0.1*sqrt(2)+3}, {-0.1*sqrt(2) -1.7});
\draw[red, ->, line width=0.4mm] ({-0.1*sqrt(2) -1},{0.1*sqrt(2)+1.7}) -- ({-0.1*sqrt(2)-2}, {0.1*sqrt(2)+2.7});                		 
\draw[red, ->, line width=0.4mm] (3,-2.3) -- ({-0.1*sqrt(2)}, {0.1*sqrt(2) +0.7});  
\end{tikzpicture}
\caption{The action of $\mathfrak{gl}(2)$ on $x_1^{k_1} x_2^{k_2}$ with $k_1 + k_2 = 1$.}
\label{figure-01}
\end{figure}
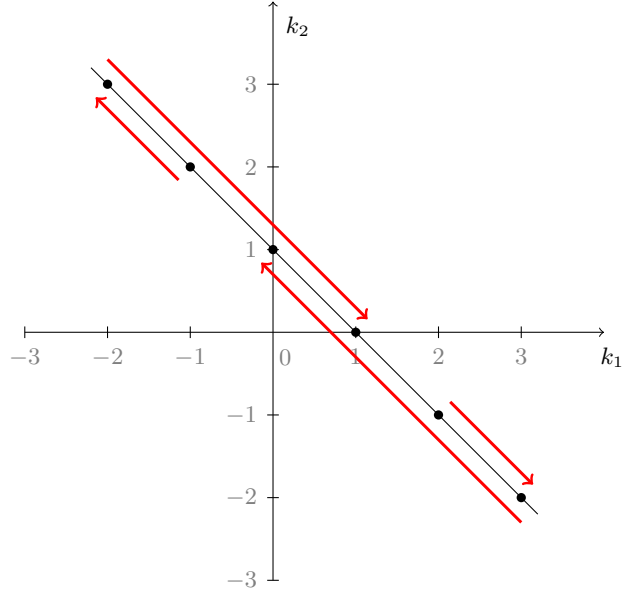

More generally, the planes $k_j=0$ divide the hyperplane $k_1 + k_2 + \cdots + k_n =m$
into regions labeled by the signs of the coordinates $k_i$. 
Then, for each $i$, we can obtain weight vectors 
$\mathbf{x}^{\mathbf{k}}$ with $k_i \geq 0$ starting  
from the ones with $k_i <0$ by successively applying some elements of $\mathfrak{gl}(n)$, 
but the opposite way is not possible. See Figure \ref{figure02}.

Therefore, our indecomposable submodules in $L_n$ and simple modules 
obtained from their quotients are labeled by degree $m$ of 
$\mathbf{x}^{\mathbf{k}}$ and subsets $J$ of $\{1,2,...,n\}$ indicating
the position of possible negative components in $\mathbf{k}$. 
Their structures depend heavily on $m$ and the cardinality of $J$. 
We will give a clear case-by-case analysis of them.

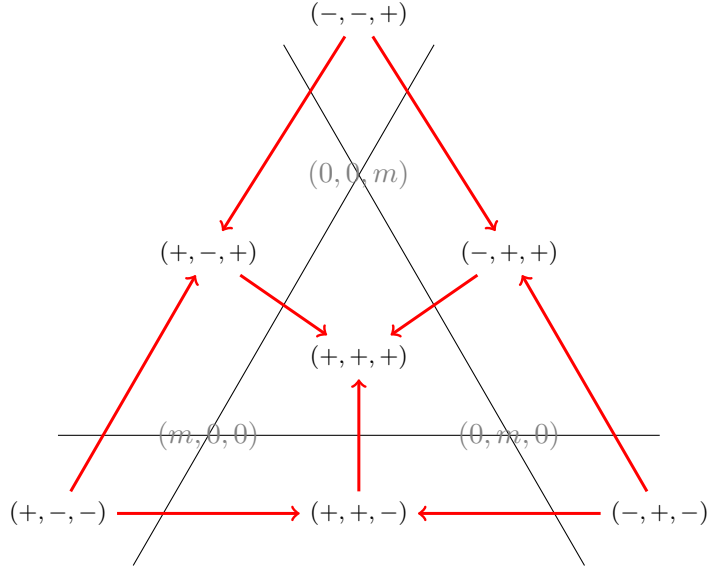
\begin{figure}
\centering
\begin{tikzpicture}[scale=0.4]
\draw (-10,0) -- (10,0) ;
\draw (-7.5,{-0.5*5*sqrt(3)}) -- (2.5,{1.5*5*sqrt(3)});
\draw (7.5, {-0.5*5*sqrt(3)}) -- (-2.5,{1.5*5*sqrt(3)});
\node[color=gray] (x)  at (-5,0) {$(m,0,0)$};
\node[color=gray] (y)  at (5,0) {$(0,m,0)$};
\node[color=gray] (z)  at (0,{5*sqrt(3)}) {$(0,0,m)$};
\node (p0) at (0,{0.3*5*sqrt(3)}) {\footnotesize $(+, +, +)$};
\node (p1) at (0,{-0.3*5*sqrt(3)}) {\footnotesize $(+,+,-)$};
\node (p2) at (-5,{0.7*5*sqrt(3)}) {\footnotesize $(+, -, +)$};
\node (p3) at (5,{0.7*5*sqrt(3)}) {\footnotesize $(-, +,+)$};
\node (p4) at (-10,{-0.3*5*sqrt(3)}) {\footnotesize $(+, -, -)$};
\node (p5) at (10,{-0.3*5*sqrt(3)}) {\footnotesize $(-, +, -)$};
\node (p6) at (0,14) {\footnotesize $(-,-,+)$};
\draw[->, line width=0.4mm] (p1) edge [color=red] (p0);
\draw[->, line width=0.4mm] (p2) edge [color=red] (p0);
\draw[->, line width=0.4mm] (p3) edge [color=red] (p0);
\draw[->, line width=0.4mm] (p4) edge [color=red] (p1);
\draw[->, line width=0.4mm] (p4) edge [color=red] (p2);
\draw[->, line width=0.4mm] (p5) edge [color=red] (p1);
\draw[->, line width=0.4mm] (p5) edge [color=red] (p3);
\draw[->, line width=0.4mm] (p6) edge [color=red] (p2);
\draw[->, line width=0.4mm] (p6) edge [color=red] (p3);
\end{tikzpicture}
\caption{The action of $\mathfrak{gl}(3)$ on $x_1^{k_1} x_2^{k_2} x_3^{k_3}$ with $k_1 + k_2 + k_3 =m$ ($m >0$).}
\label{figure02}
\end{figure}

\subsection{Representations of the general linear Lie algebra}

Our next goal is  
to provide explicit constructions of weight multiplicity-free irreducible 
representations of $\mathfrak{gl}(n)$ obtained by twisting the action \eqref{poly-action-Pn}.
For a general theory on weight multiplicity-free representations 
of simple Lie algebras, see \cite{BL87} and references therein.

Motivated by works on  weight modules of the Lie algebra 
of diffeomorphisms of the $n$-dimensional torus (see, for example, \cite{Rao94, Rao96, GZ11}),
for each $\boldsymbol{\alpha}=(\alpha_1, ..., \alpha_n) \in \mathbb{C}^n$, 
we will define a representation  $L^{\boldsymbol{\alpha}}_n$ of $\mathfrak{gl}(n)$ 
on the vector space $L_n$ (see Definition \ref{alpha-action-deff}).
Then, we investigate two families of its submodules, 
$L^{\boldsymbol{\alpha}}_n(m,j)$ and $V^{\boldsymbol{\alpha}}_n(m,J)$, 
parameterized by integers $m$, $j$, and subsets $J$ of $\{i: \alpha_i=0 \}$. 
We can obtain explicit simple $\mathfrak{gl}(n)$-modules from 
the decomposition of the quotient modules 
\[
L^{\boldsymbol{\alpha}}_n(m,j) /\, L^{\boldsymbol{\alpha}}_n(m,j-1)
  = \bigoplus_{J: |J|=j} W^{\boldsymbol{\alpha}}_n(m,J)
\]
where $W^{\boldsymbol{\alpha}}_{n}(m,J)$ are simple modules defined by
\[
W^{\boldsymbol{\alpha}}_{n}(m,J) = \left( V^{\boldsymbol{\alpha}}_{n}(m,J) 
  +L^{\boldsymbol{\alpha}}_{n}(m,j-1) \right) / L^{\boldsymbol{\alpha}}_{n}(m,j-1).
\]

Among these simple modules, 
there are highest weight modules with highest weights of the form 
$\psi^{\lambda} \in \mathfrak{h}^{\ast}$ where 
$\lambda=(-1, ..., -1, z, 0,..., 0) \in \mathbb{C}^n$ 
including the finite dimensional ones having  
integral dominant weights with $\lambda=(k,0,...,0)$ and $(-1,..., -1, \ell)$ 
for $k \geq 0$ and $\ell \leq -1$.

\medskip


\section{A filtration on $L_{n}$ and simple modules in $gr L_n$}


In this section, we impose a filtration on the ring  
\[
L_n = \mathbb{C}[x_1^{\pm 1}, ..., x_n^{\pm 1}]
\]
of Laurent polynomials in $n$ indeterminates over the complex numbers $\mathbb{C}$, 
and then show that the graded structure of its associated graded ring 
is compatible with the module structure of $L_n$ over the complex general linear Lie 
algebra $\mathfrak{gl}(n)=\mathfrak{gl}_n(\mathbb{C})$. 

Recall that $\mathfrak{gl}(n)$ is the Lie algebra of $n \times n$ complex 
matrices with the usual matrix addition and 
the Lie bracket given by the commutator of two matrices.
We will write  $\mathcal{U}_n=\mathcal{U}(\mathfrak{gl}(n))$ 
for the universal enveloping algebra of $\mathfrak{gl}(n)$.

\subsection{Submodules of $L_n$}

The complex vector space $L_n$ is spanned by monomials 
$\mathbf{x}^{\mathbf{k}}= x^{k_1}_1 x^{k_2}_2 \cdots x^{k_n}_n$ 
for $\mathbf{k}=(k_1, k_2,...,k_n) \in \mathbb{Z}^n$. 
We define some subspaces of $L_n$.

\begin{definition}
Let $m$ be an integer, $j$ be an integer with $0 \leq j \leq n$, and $J$ be 
a subset of $\{ 1,2,...,n\}$.
\begin{enumerate}
\item Let $V_{n}(m,J)$ be the subspace of $L_n$ spanned by all the monomials 
$x^{k_1}_1 x^{k_2}_2 \cdots x^{k_n}_n$ 
such that 
\[
\sum_{i=1}^n k_i = m  \quad \text{and} \quad \{i : k_i <0 \} \subseteq J.
\]

\item Let $L_{n}(m,j)$ be the sum of the subspaces $V_n(m, J)$ of $L_n$
\[
L_{n}(m,j) = \sum_{J: |J|=j} V_{n}(m,J)
\]
over all subsets $J$ of $\{1,2,...,n\}$ having $j$ elements.
\end{enumerate}
\end{definition}

It follows directly from the definition that
\[
V_n(m, J_1) \subseteq V_n(m ,J_2) \quad \text{for all $J_1 \subseteq J_2$.}
\]
Then, we have
\[
L_n (m,j-1) \subseteq L_n (m,j) 
\]
and their  quotient can be expressed as
\begin{align*}
L_{n}(m,j) / L_{n}(m,j-1)
 &= \left( \sum_{J: |J|=j} V_{n}(m,J) \right) / L_{n}(m,j-1) \\
 &= \sum_{J: |J|=j} \left( V_{n}(m,J) 
  +L_{n}(m,j-1) \right) / L_{n}(m,j-1).
\end{align*}

\begin{definition}
For an integer $m$ and a subset $J$ of $\{ 1,2,...,n\}$ having $j$ elements, 
we let $W_{n}(m,J)$ denote the subspace 
\[
W_{n}(m,J) 
= \left( V_{n}(m,J) 
  +L_{n}(m,j-1) \right) / L_{n}(m,j-1)
\]
of the quotient space $L_{n}(m,j) / L_{n}(m,j-1)$.
\end{definition}

The spaces $V_n(m,J)$, $L_n(m,j)$, and $W_n(m,J)$ are special cases of 
the ones defined in Definition \ref{L-V-def} and Definition \ref{def-W} 
with $\boldsymbol{\alpha}=\mathbf{0}$ and $I_{\boldsymbol{\alpha}}=\{1,2,...,n \}$, and 
they are  modules over $\mathcal{U}_n$ with respect to
\[
A \cdot f = \sum_{ij} a_{ij} x_i \frac{\partial f}{\partial x_j}
\]
for $A=(a_{ij}) \in \mathfrak{gl}(n)$ and $f \in L_n$. See 
Theorems \ref{V-cyclic-plus}, \ref{V-cyclic-minus}, \ref{L-indecomp-positive}, 
and \ref{L-indecomp-negative}. 
Moreover, $W_{n}(m,J)$ are simple modules. See Theorem \ref{W-subquot}.

\begin{lemma}\label{W-quot-0-isom}
For $m \in \mathbb{Z}$ and a subset $J$ of $\{ 1, 2, ..., n\}$ having $j$ elements, 
as a $\mathcal{U}_n$-module,
\[
W_{n}(m,J)  \; \cong \; V_{n}(m,J) /  \sum_{J'} V_{n}(m,J')
\]
where the summation is over all subsets $J'$ of $J$ having $j-1$ elements
\end{lemma}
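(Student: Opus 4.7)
The plan is to deduce the statement from the second isomorphism theorem and a transparent monomial-basis computation, so no Lie-algebraic input is actually needed for this lemma beyond knowing that all the spaces in sight are $\mathcal{U}_n$-submodules.

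First I would apply the second isomorphism theorem for modules to rewrite
\[
W_{n}(m,J) \;=\; \bigl(V_{n}(m,J) + L_{n}(m,j-1)\bigr)/L_{n}(m,j-1)
\;\cong\; V_{n}(m,J) \,/\, \bigl(V_{n}(m,J)\cap L_{n}(m,j-1)\bigr).
\]
Thus the lemma reduces to the identity
\[
V_{n}(m,J)\cap L_{n}(m,j-1) \;=\; \sum_{J'\subseteq J,\;|J'|=j-1} V_{n}(m,J'),
\]
where crucially the sum on the right ranges only over subsets of $J$, whereas $L_{n}(m,j-1)$ sums over all $(j-1)$-subsets of $\{1,\dots,n\}$.

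For the inclusion $\supseteq$, any $J'\subseteq J$ with $|J'|=j-1$ satisfies $V_{n}(m,J')\subseteq V_{n}(m,J)$ by the monotonicity stated right after the definition, and also $V_{n}(m,J')\subseteq L_{n}(m,j-1)$ by the defining sum, so this is immediate. For the inclusion $\subseteq$, I would exploit the fact that the Laurent monomials $\mathbf{x}^{\mathbf{k}}$ form a basis of $L_n$: the subspace $V_n(m,J'')$ is spanned by monomials with $\sum k_i=m$ and $\{i:k_i<0\}\subseteq J''$, so $L_{n}(m,j-1)$ is exactly the span of the monomials $\mathbf{x}^{\mathbf{k}}$ with $\sum k_i = m$ and $|\{i:k_i<0\}|\leqslant j-1$, and similarly $V_n(m,J)$ is the span of those with $\{i:k_i<0\}\subseteq J$. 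A monomial in the intersection therefore satisfies $\{i:k_i<0\}\subseteq J$ and has at most $j-1$ negative components.

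The final step, which is really the only content, is to extend: given such a monomial, let $N=\{i:k_i<0\}\subseteq J$ with $|N|\leqslant j-1$; since $|J|=j$, we can pick some $J'$ with $N\subseteq J'\subseteq J$ and $|J'|=j-1$, and then $\mathbf{x}^{\mathbf{k}}\in V_{n}(m,J')$. Summing over the monomials appearing in an arbitrary element of $V_n(m,J)\cap L_n(m,j-1)$ gives the desired inclusion. I do not anticipate a genuine obstacle; the only thing to be careful about is keeping the two different indexing sets (subsets of $J$ versus subsets of $\{1,\dots,n\}$) straight when reducing the intersection to a sum of $V_n(m,J')$ with $J'\subseteq J$.
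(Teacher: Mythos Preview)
Your proposal is correct and follows essentially the same route as the paper: apply the second isomorphism theorem and then identify $V_{n}(m,J)\cap L_{n}(m,j-1)$ with $\sum_{J'\subseteq J,\,|J'|=j-1} V_{n}(m,J')$. The paper simply asserts this intersection identity as an ``observation'', whereas you spell out the monomial-basis argument, but the underlying idea is identical.
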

\begin{proof}
Note that
\begin{align*}
W_{n}(m,J) & \; = \;   \left( V_{n}(m,J) +L_{n}(m,j-1) \right) / L_{n}(m,j-1)  \\ 
                    & \; \cong \; V_{n}(m,J) / \left( V_{n}(m,J) \cap L_{n}(m,j-1) \right) .
\end{align*}
Then, the statement follows from the following observation
\[
V_{n}(m,J) \cap L_{n}(m,j-1) = \sum_{J'} V_{n}(m,J')
\]
where the sum is over all subsets $J'$ of $J$ having $j-1$ elements.
\end{proof}

\subsection{Filtration by a partially ordered monoid}

Let $\mathscr{P}_n$ be the set of all subsets of $\{1,2,...,n\}$. 
On the set $\mathbb{Z} \times \mathscr{P}_n$, 
we define the partial order
\[
(m_1, J_1) \leqslant (m_2, J_2)
\]
if $m_1 \leq m_2$ and $J_1 \subseteq J_2$, and the multiplication
\[
(m_1, J_1) \ast (m_2, J_2) = (m_1 + m_2, J_1 \cup J_2).
\]

With $\leqslant$ and $\ast$, 
$\mathbb{Z} \times \mathscr{P}_n$ becomes 
a partially ordered monoid with the identity $(0, \varnothing)$, 
and using this monoid we want to impose a filtration on $L_n$. 
For basic properties of a filtration of a ring given by a partially ordered monoid, 
we refer to \cite[\S I.12]{BG02}.

\begin{definition}
For each $(m, J) \in \mathbb{Z} \times \mathscr{P}_n$, we define
\[
L_n^{\leqslant (m, J)}  = \sum_{(m_1,J_1)} V_n (m_1, J_1) \quad \text{and} \quad
L_n^{< (m, J)}  = \sum_{(m_2, J_2)} V_n (m_2, J_2)
\]
where the first summation is over all $(m_1, J_1)$ such that $(m_1, J_1) \leqslant (m, J)$; 
and the second summation is over all $(m_2, J_2)$  such that $(m_2, J_2) \leqslant (m, J)$ 
but $(m_2, J_2) \ne (m, J)$.
\end{definition}

\begin{proposition}
The family $\{ L_n^{\leqslant (m, J)} : (m, J) \in \mathbb{Z} \times \mathscr{P}_n \}$ 
of subspace of $L_n$ defines a filtration on $L_n$ by the partially ordered monoid $\mathbb{Z} \times \mathscr{P}_n$.
\end{proposition}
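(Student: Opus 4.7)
The plan is to verify the three standard axioms for a ring filtration indexed by a partially ordered monoid, together with the usual unit condition:
(i) monotonicity, $L_n^{\leqslant \alpha} \subseteq L_n^{\leqslant \beta}$ whenever $\alpha \leqslant \beta$;
(ii) multiplicative compatibility, $L_n^{\leqslant \alpha} \cdot L_n^{\leqslant \beta} \subseteq L_n^{\leqslant \alpha \ast \beta}$;
(iii) exhaustion, $\bigcup_{\alpha} L_n^{\leqslant \alpha} = L_n$; and the unit is contained in $L_n^{\leqslant (0, \varnothing)}$.

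The key preliminary step is to rewrite $L_n^{\leqslant (m, J)}$ monomially. Since each summand $V_n(m_1, J_1)$ has a basis of monomials and occurs precisely when $m_1 \leq m$ and $J_1 \subseteq J$, one obtains
\[
L_n^{\leqslant (m, J)} \; = \; \mathrm{span}_{\mathbb{C}} \Bigl\{ \mathbf{x}^{\mathbf{k}} \; : \; \sum_{i=1}^n k_i \leq m, \; \{i : k_i < 0\} \subseteq J \Bigr\}.
\]
With this monomial description, each axiom reduces to a simple check on exponent vectors. Monotonicity is immediate: if $m_1 \leq m_2$ and $J_1 \subseteq J_2$, then every $\mathbf{x}^{\mathbf{k}}$ satisfying the constraints on the $(m_1, J_1)$ side also satisfies them on the $(m_2, J_2)$ side. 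For exhaustion, any monomial $\mathbf{x}^{\mathbf{k}} \in L_n$ lies in $V_n\bigl(\sum_i k_i, \{i : k_i < 0\}\bigr) \subseteq L_n^{\leqslant (\sum_i k_i,\, \{i:k_i<0\})}$, and the unit $1 = \mathbf{x}^{\mathbf{0}}$ lies in $V_n(0, \varnothing) \subseteq L_n^{\leqslant (0, \varnothing)}$.

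The central point is (ii), and the obstacle, such as it is, is keeping the argument clean by reducing to pairs of basis monomials. Because each $L_n^{\leqslant (m_i, J_i)}$ is monomially spanned, bilinearity reduces the check to monomials $\mathbf{x}^{\mathbf{k}} \in L_n^{\leqslant (m_1, J_1)}$ and $\mathbf{x}^{\mathbf{k}'} \in L_n^{\leqslant (m_2, J_2)}$. Their product is the single monomial $\mathbf{x}^{\mathbf{k} + \mathbf{k}'}$, whose total exponent sum is $\sum_i (k_i + k_i') \leq m_1 + m_2$. Moreover, whenever $k_i + k_i' < 0$ at least one of $k_i, k_i'$ must itself be negative, so
\[
\{i : k_i + k_i' < 0\} \; \subseteq \; \{i : k_i < 0\} \cup \{i : k_i' < 0\} \; \subseteq \; J_1 \cup J_2.
\]
Hence $\mathbf{x}^{\mathbf{k} + \mathbf{k}'} \in L_n^{\leqslant (m_1 + m_2, J_1 \cup J_2)} = L_n^{\leqslant (m_1, J_1) \ast (m_2, J_2)}$, which is (ii). Collecting these observations verifies all the defining properties of a filtration by the partially ordered monoid $\mathbb{Z} \times \mathscr{P}_n$.
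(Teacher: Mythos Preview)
Your proof is correct and follows essentially the same approach as the paper: both verify the four standard filtration axioms (unit, monotonicity, multiplicative compatibility, exhaustion) directly from the definitions of $L_n^{\leqslant (m,J)}$ and $V_n(m,J)$. The paper's own proof simply lists the axioms and states that ``all of them follow directly from the definitions,'' whereas you have helpfully spelled out the monomial description and the key inclusion $\{i : k_i + k_i' < 0\} \subseteq \{i : k_i < 0\} \cup \{i : k_i' < 0\}$ that makes the multiplicative axiom work.
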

\begin{proof}
We need to check the following conditions (see \cite[\S I.12]{BG02}).
\begin{enumerate}
\item $1 \in L_n^{\leqslant (0, \varnothing)}$,
\item for all $(m_1, J_1) \leqslant (m_2, J_2)$,
\[
L_n^{\leqslant (m_1, J_1)} \subseteq L_n^{\leqslant (m_2, J_2)},
\] 
\item for all $(m_1, J_1)$ and $(m_2, J_2)$,
\[
L_n^{\leqslant (m_1, J_1)} L_n^{\leqslant (m_2, J_2)} \subseteq L_n^{\leqslant (m_1, J_1) \ast (m_2, J_2)},
\] 
\item the union of all such subspaces is equal to $L_n$
\[
\bigcup_{(m, J) \in \mathbb{Z} \times \mathscr{P}_n} L_n^{\leqslant (m, J)} = L_n.
\]
\end{enumerate}
All of them follow directly from the definitions of $L_n^{\leqslant (m, J)}$ and $V_n(m,J)$.
\end{proof}

For $(m_1, J_1)$ and $(m_2, J_2)$ in $\mathbb{Z} \times \mathscr{P}_n$, we have 
\[
L_n^{\leqslant (m_1, J_1)}  L_n^{< (m_2, J_2)} + 
L_n^{< (m_1, J_1)}  L_n^{\leqslant (m_2, J_2)}
\; \subseteq \; L_n^{< (m_1, J_1) \ast (m_2, J_2)}
\]  
and therefore the quotient spaces 
$L_n^{\leqslant (m, J)} / L_n^{< (m, J)}$  
form the homogeneous components of the associated graded ring of $L_n$.


\begin{theorem}
With the filtration $\{ L_n^{\leqslant (m, J)} : (m, J) \in \mathbb{Z} \times \mathscr{P}_n \}$ 
imposed on $L_n$, the direct sum decomposition of the associated graded ring 
\[
gr L_n = \bigoplus_{(m,J)\in \mathbb{Z} \times \mathscr{P}_n} 
           { L_n^{\leqslant (m, J)} / L_n^{< (m, J)} }
\]
agrees with the decomposition of $gr L_n$ into its simple submodules over $\mathcal{U}_n$. 
In particular, for each $(m, J) \in \mathbb{Z} \times \mathscr{P}_n$, as a $\mathcal{U}_n$-module,
\[
L_n^{\leqslant (m, J)} / L_n^{< (m, J)} \cong W_n(m, J).
\]
\end{theorem}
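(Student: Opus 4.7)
The plan is to reduce the statement to the already-established content of Lemma \ref{W-quot-0-isom} and the (forthcoming) simplicity result Theorem \ref{W-subquot}, using the fact that every $V_n(m_1, J_1)$ is spanned by monomials of one fixed total degree $m_1$. Because the monomials $\mathbf{x}^{\mathbf{k}}$ form a basis of $L_n$ and are weight vectors of distinct weights, any element of $L_n$ decomposes uniquely into its homogeneous-degree components, and this will let me separate the two roles of the index $(m, J)$: $m$ tracks total degree, while $J$ tracks the locus of negative exponents.

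First, I would unpack the filtration piece at $(m, J)$. From the definitions,
\[
L_n^{\leqslant (m, J)} = V_n(m, J) + L_n^{<(m, J)},
\]
since the summand $V_n(m, J)$ is the only one corresponding to the index $(m, J)$ itself. By the second isomorphism theorem,
\[
L_n^{\leqslant (m, J)} / L_n^{<(m, J)} \;\cong\; V_n(m, J) \big/ \bigl(V_n(m, J) \cap L_n^{<(m, J)}\bigr).
\]
Second, I would compute the intersection. An element $f \in V_n(m, J) \cap L_n^{<(m, J)}$ is homogeneous of total degree $m$; writing $f$ as a finite sum $\sum_i g_i$ with $g_i \in V_n(m_i, J_i)$ and $(m_i, J_i) < (m, J)$, and collecting terms by total degree, only the summands with $m_i = m$ can contribute. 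For those, $J_i \subsetneq J$, and since $V_n(m, J_i) \subseteq V_n(m, J')$ whenever $J_i \subseteq J'$, the intersection is exactly
\[
V_n(m, J) \cap L_n^{<(m, J)} \;=\; \sum_{J' \subsetneq J,\; |J'| = |J| - 1} V_n(m, J').
\]
Lemma \ref{W-quot-0-isom} then identifies the resulting quotient with $W_n(m, J)$ as a $\mathcal{U}_n$-module, which yields the isomorphism displayed in the theorem.

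Third, I would assemble the global statement. The direct-sum decomposition
\[
gr L_n = \bigoplus_{(m, J) \in \mathbb{Z} \times \mathscr{P}_n} L_n^{\leqslant (m, J)} / L_n^{<(m, J)}
\]
is part of the general definition of the associated graded ring of a filtration by the partially ordered monoid $\mathbb{Z} \times \mathscr{P}_n$, which has already been verified above. The $\mathcal{U}_n$-action on each graded piece is well-defined because each $L_n^{\leqslant (m, J)}$ is $\mathcal{U}_n$-stable (this is one of the results cited from the subsequent section), and simplicity of each $W_n(m, J)$ is Theorem \ref{W-subquot}. Combining the piecewise isomorphism with simplicity then shows that the monoidal grading coincides with the isotypic decomposition of $gr L_n$ into simple $\mathcal{U}_n$-submodules.

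The main obstacle is ensuring that the computation of $V_n(m, J) \cap L_n^{<(m, J)}$ is correct: the naive worry is that $L_n^{<(m, J)}$ contains many summands $V_n(m_1, J_1)$ of degrees $m_1 \neq m$, and one must ensure these cannot combine to produce homogeneous-degree-$m$ elements that fail to lie in $\sum_{J' \subsetneq J} V_n(m, J')$. The monomial basis and the conservation of total degree under the action \eqref{poly-action-Pn} make this straightforward, but it is the one computation that must be done carefully; everything else is either definitional unpacking or appeal to results referenced in the excerpt.
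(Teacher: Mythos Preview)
Your proposal is correct and follows essentially the same route as the paper: set $S=V_n(m,J)$, use the second isomorphism theorem $(S+L_n^{<(m,J)})/L_n^{<(m,J)}\cong S/(S\cap L_n^{<(m,J)})$, compute the intersection as $\sum_{J'\subset J,\ |J'|=j-1}V_n(m,J')$ by discarding the wrong-degree summands, and then invoke Lemma~\ref{W-quot-0-isom}. The paper organizes the same computation by writing $T=\sum_{J_1\subsetneq J}V_n(m,J_1)+\sum_{m_1<m}V_n(m_1,J)$ explicitly and checking $S\cap T$ directly, but the substance is identical.
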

\begin{proof}
For an integer $m$ and a subset $J$ of $\{ 1, 2, ..., n\}$ having $j$ elements, let us define
\[
S =    V_n(m,J) \quad \text{ and } \quad
T  = \sum_{ J_1 \subset J} V_n(m, J_1) + \sum_{m_1 < m} V_n(m_1, J)
\]
where the first summation for $T$ is over all proper subsets $J_1$ of $J$, and the second summation 
is over all $m_1$ strictly less than $m$. 
Then, we have
\begin{align*}
S \cap T & \; = \;   V_n(m,J) \cap \left(  \sum_{ J_1 \subset J} V_n(m, J_1) + \sum_{m_1 < m} V_n(m_1, J) \right)     \\  
              & \; = \;   V_n(m,J) \cap \left(  \sum_{ J_1 \subset J} V_n(m, J_1)  \right)     \\  
              & \; = \;   \sum_{J'} V_{n}(m,J')  
\end{align*}
where the last summation is over all subsets $J'$ of $J$ having $j-1$ elements. 
Note that
\[
S+T =  L_n^{\leqslant (m, J)} \quad \text{and} \quad T = L_n^{< (m, J)}
\]
and then, by the usual module isomorphism theorem 
$(S + T) / T \cong S / (S \cap T)$, we obtain
\begin{equation} \label{LL-W-iso}
L_n^{\leqslant (m, J)} / L_n^{< (m, J)}  \cong  V_n(m,J)/  \sum_{ J'} V_n(m, J') 
\end{equation}
where the summation is over all subsets $J'$ of $J$ having $j-1$ elements.
Now, using the realization of $W_n(m,J)$ given in Lemma \ref{W-quot-0-isom},
we have 
\[
L_n^{\leqslant (m, J)} / L_n^{< (m, J)} \cong W_n(m,J).
\]
\end{proof}

In \S \ref{sec-W-alpha}, we will investigate the simple modules $W_n(m,J)$ 
in a more general setting.

\medskip


\section{Modules $L^{\boldsymbol{\alpha}}_{n}(m,j)$ 
                  and $V^{\boldsymbol{\alpha}}_n(m,J)$}


In this section, generalizing $L_n(m, J)$ and $V_n(m, J)$ discussed in the previous section,  
we define some submodules of $L_n$ over the universal enveloping algebra $\mathcal{U}_n$ of 
$\mathfrak{gl}(n)$ parameterized by 
$\boldsymbol{\alpha}=(\alpha_1, ..., \alpha_n) \in \mathbb{C}^n$.

\begin{notation}\label{notation-exp-e}
\begin{enumerate}
\item For a finite set $S$, we will write $|S|$ for the cardinality of $S$.

\item For $\boldsymbol{\alpha}=(\alpha_1, ..., \alpha_n) \in \mathbb{C}^n$, 
we write $\boldsymbol{\alpha}[\ell]$ for the $\ell$th component
$\alpha_{\ell}$ of $\boldsymbol{\alpha}$. Then, for 
$\boldsymbol{\alpha}, \boldsymbol{\beta} \in \mathbb{C}^n$, we let 
$\boldsymbol{\alpha} \pm \boldsymbol{\beta}$ be the elements in 
$\mathbb{C}^n$ such that
\[
(\boldsymbol{\alpha}  \pm \boldsymbol{\beta})[\ell]
    = \boldsymbol{\alpha}[\ell] \pm \boldsymbol{\beta}[\ell]  
    \quad \text{for $1\leq \ell \leq n$.}
\]

\item We write $\mathbf{e}_j$ for the element in $\mathbb{Z}^n$ 
whose $j$th entry is one and all the other entries are zero.
\[
\mathbf{e}_j[\ell]= 
   \left\{ \begin{array}{ll} 1 & \text{if $\ell=j$,} \\
            0 & \text{otherwise.} \end{array} \right.
\]

\item For $\mathbf{k} \in \mathbb{Z}^n$, let $\mathbf{x}^{\mathbf{k}}$ 
be the monomial 
\[
\mathbf{x}^{\mathbf{k}}
  = x_1^{\mathbf{k}[1]} x_2^{\mathbf{k}[2]} \cdots x_n^{\mathbf{k}[n]}
\]
in $L_n$.  In this setting, we denote the negative part of $\mathbf{k}$ by
\[
\mathbf{k}^{neg} = \{ \ell : \mathbf{k}[\ell] < 0 \}.
\]
\end{enumerate}
\end{notation}

We let  $E_{ab} \in \mathfrak{gl}(n)$ be
the $n \times n$ matrix with one in $(a,b)$ and zero elsewhere.
\begin{definition}\label{alpha-action-deff}
For each $\boldsymbol{\alpha} \in \mathbb{C}^n$, 
$E_{ab}$ acts on the monomials $\mathbf{x}^{\mathbf{k}}$ in the algebra 
$L_n$ of Laurent polynomials as
\[
E_{ab} \cdot \mathbf{x}^{\mathbf{k}} \; = \; 
     \left( \mathbf{k}[b] + \boldsymbol{\alpha}[b] \right) \, 
       \mathbf{x}^{\mathbf{k} + \mathbf{e}_a - \mathbf{e}_b }
     \qquad \text{for $1\leq a,b \leq n$}.
\] 
With this action, the space $L_n$ gives rise to a $\mathcal{U}_n$-module, 
which we will denote by $L_n^{\boldsymbol{\alpha}}$, and
for $f \in L_n$ we write $\langle f \rangle$ for 
the cyclic submodule of $L_n^{\boldsymbol{\alpha}}$ generated by $f$.
\end{definition}

Informally, we may think of the above action as
\[
E_{ab} \cdot f \; = \; 
  \mathbf{x}^{- \boldsymbol{\alpha}} \, x_a \frac{\partial \ }{\partial x_{b}} 
      \,  \mathbf{x}^{\boldsymbol{\alpha}} f \quad \text{for $f \in L_n$}
\] 
and then the action in the definition can be considered a generalization 
of the action \eqref{poly-action-Pn} of $\mathfrak{gl}(n)$ on the polynomial 
ring which provides all the finite dimensional 
representations of $\mathfrak{gl}(n)$ labeled by Young diagrams 
with single rows. See Theorem \ref{L-indecomp-positive} (2).

\begin{lemma}  \label{equi-iso}
For $\boldsymbol{\alpha}$ and $\boldsymbol{\beta} \in \mathbb{C}^n$, 
if $\boldsymbol{\alpha}-\boldsymbol{\beta} \in \mathbb{Z}^n$ then, 
as a $\mathcal{U}_n$-module, $L_{n}^{\boldsymbol{\alpha}}$ is 
isomorphic to $L_{n}^{\boldsymbol{\beta}}$.
\end{lemma}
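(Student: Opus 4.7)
The plan is to exhibit an explicit $\mathcal{U}_n$-equivariant linear bijection between $L_{n}^{\boldsymbol{\beta}}$ and $L_{n}^{\boldsymbol{\alpha}}$, obtained by translating the exponent vectors. Informally, the action defining $L_n^{\boldsymbol{\alpha}}$ should be conjugate to the one for $L_n^{\boldsymbol{\beta}}$ via the formal multiplier $\mathbf{x}^{\boldsymbol{\alpha}-\boldsymbol{\beta}}$, and under the hypothesis $\boldsymbol{\alpha}-\boldsymbol{\beta}\in\mathbb{Z}^n$ this multiplier actually lives inside $L_n$, so the conjugation makes rigorous sense.

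Concretely, set $\mathbf{d}=\boldsymbol{\alpha}-\boldsymbol{\beta}\in\mathbb{Z}^n$ and define a linear map
\[
\Phi: L_{n}^{\boldsymbol{\alpha}} \longrightarrow L_{n}^{\boldsymbol{\beta}},
\qquad \Phi(\mathbf{x}^{\mathbf{k}}) = \mathbf{x}^{\mathbf{k}+\mathbf{d}},
\]
extended linearly from the monomial basis. Since the translation $\mathbf{k}\mapsto \mathbf{k}+\mathbf{d}$ is a bijection on $\mathbb{Z}^n$, $\Phi$ is immediately a linear isomorphism of vector spaces; its inverse is the analogous translation by $-\mathbf{d}$.

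The only content is intertwining. Using Definition \ref{alpha-action-deff} on both sides and the identity $\mathbf{d}[b]+\boldsymbol{\beta}[b]=\boldsymbol{\alpha}[b]$, I would compute on a monomial $\mathbf{x}^{\mathbf{k}}$:
\[
\Phi\bigl(E_{ab}\cdot_{\boldsymbol{\alpha}}\mathbf{x}^{\mathbf{k}}\bigr)
 =(\mathbf{k}[b]+\boldsymbol{\alpha}[b])\,\mathbf{x}^{\mathbf{k}+\mathbf{d}+\mathbf{e}_a-\mathbf{e}_b},
\]
\[
E_{ab}\cdot_{\boldsymbol{\beta}}\Phi(\mathbf{x}^{\mathbf{k}})
 =\bigl((\mathbf{k}+\mathbf{d})[b]+\boldsymbol{\beta}[b]\bigr)\,\mathbf{x}^{\mathbf{k}+\mathbf{d}+\mathbf{e}_a-\mathbf{e}_b},
\]
and the two scalars agree. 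Since the matrix units $E_{ab}$ span $\mathfrak{gl}(n)$ and $\Phi$ is linear, this suffices to show that $\Phi$ is a homomorphism of $\mathfrak{gl}(n)$-modules, hence of $\mathcal{U}_n$-modules.

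There is no real obstacle: the hypothesis $\boldsymbol{\alpha}-\boldsymbol{\beta}\in\mathbb{Z}^n$ is used exactly to ensure that $\mathbf{x}^{\mathbf{d}}$ is a genuine Laurent monomial (so that $\Phi$ maps into $L_n$ and not into some formal extension), and the intertwining identity is just the cancellation $\mathbf{d}[b]+\boldsymbol{\beta}[b]=\boldsymbol{\alpha}[b]$. The only thing to watch is the bookkeeping of which scalar $(\mathbf{k}[b]+\boldsymbol{\alpha}[b])$ versus $((\mathbf{k}+\mathbf{d})[b]+\boldsymbol{\beta}[b])$ appears on each side, which is the sole place the two actions differ before translation.
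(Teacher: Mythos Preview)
Your proposal is correct and is essentially identical to the paper's own proof: both define the translation map $\mathbf{x}^{\mathbf{k}}\mapsto \mathbf{x}^{\mathbf{k}+\boldsymbol{\alpha}-\boldsymbol{\beta}}$ from $L_n^{\boldsymbol{\alpha}}$ to $L_n^{\boldsymbol{\beta}}$ and verify the intertwining on the generators $E_{ab}$ via the identity $(\mathbf{k}+\boldsymbol{\alpha}-\boldsymbol{\beta})[b]+\boldsymbol{\beta}[b]=\mathbf{k}[b]+\boldsymbol{\alpha}[b]$.
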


\begin{proof}
It is enough to show that the linear map $\psi$ from $L_n$ to $L_n$ 
sending $\mathbf{x}^{\mathbf{k}}$ to 
$\mathbf{x}^{\mathbf{k}+ \boldsymbol{\alpha} - \boldsymbol{\beta}}$ 
for $\mathbf{k} \in \mathbb{Z}^n$ gives a $\mathcal{U}_n$-module map 
from $L_{n}^{\boldsymbol{\alpha}}$ to $L_{n}^{\boldsymbol{\beta}}$. 
It follows from
\begin{align*}
\psi ( E_{ab} \cdot \mathbf{x}^{\mathbf{k}} ) &=
\left( \mathbf{k}[b] + \boldsymbol{\alpha}[b] \right) 
       \mathbf{x}^{\mathbf{k}+ \mathbf{e}_a - \mathbf{e}_b} \times
       \mathbf{x}^{ \boldsymbol{\alpha}- \boldsymbol{\beta}} \\
 & =  \left\{ (\mathbf{k} + \boldsymbol{\alpha} - \boldsymbol{\beta})[b]
  + \boldsymbol{\beta}[b] \right\} 
   \mathbf{x}^{(\mathbf{k}+\boldsymbol{\alpha} - \boldsymbol{\beta}) 
  + \mathbf{e}_a - \mathbf{e}_b} \\
 & = E_{ab} \cdot  \mathbf{x}^{(\mathbf{k}+\boldsymbol{\alpha}
  - \boldsymbol{\beta})}  
 = E_{ab} \cdot  \psi(  \mathbf{x}^{\mathbf{k}} )         
\end{align*}
for all $1 \leq a, b \leq n$.
\end{proof}

With this lemma, we can focus on 
the following choice of $\boldsymbol{\alpha}$.

\begin{notation}\label{alpha-I}
Once and for all, we fix $\boldsymbol{\alpha}\in \mathbb{C}^n$ 
the entries of whose real parts satisfy
\[
0 \le \mathrm{Re} (\boldsymbol{\alpha}[{\ell}]) < 1 \qquad 
 \text{for all $1 \le \ell \le n$,}
\]
and the following subset of $\{1, 2, ...,n \}$
\[
I_{\boldsymbol{\alpha}}=\{ \ell : \boldsymbol{\alpha}[{\ell}]=0 \}.
\]
\end{notation}

With this choice of $\boldsymbol{\alpha}$, we note that for 
$\mathbf{k} \in \mathbb{Z}^n$,
\begin{equation}\label{integer-zero}
\mathbf{k}[\ell] + \boldsymbol{\alpha}[\ell]= 0 \quad \text{for some $\ell$}
\end{equation}
only when $\mathbf{k}[\ell]=\boldsymbol{\alpha}[\ell]=0$.

\begin{definition}[Submodules $L^{\boldsymbol{\alpha}}_{n}(m,j)$ 
                       and $V^{\boldsymbol{\alpha}}_n(m,J)$]\label{L-V-def}
\ 
\begin{enumerate}
\item For integers $m$ and $j$ with 
 $0 \leq j \leq |I_{\boldsymbol{\alpha}}|$, 
we let $L^{\boldsymbol{\alpha}}_{n}(m,j)$ be the subspace of 
$L^{\boldsymbol{\alpha}}_n$ spanned 
by all the monomials $\mathbf{x}^{\mathbf{k}}$ such that
\[
\sum_{\ell=1}^n \mathbf{k}[\ell] = m \quad \text{and} 
     \quad | \mathbf{k}^{neg} \cap I_{\boldsymbol{\alpha}} | \leq j.
\]
\item For a subset $J$ of $I_{\boldsymbol{\alpha}}$ with $|J|=j$, 
we let $V^{\boldsymbol{\alpha}}_n(m,J)$ be the subspace of 
$L^{\boldsymbol{\alpha}}_{n}(m,j)$ spanned by all the monomials 
$\mathbf{x}^{\mathbf{k}}$ in $L^{\boldsymbol{\alpha}}_{n}(m,j)$ 
such that
\[
( \mathbf{k}^{neg} \cap I_{\boldsymbol{\alpha}} ) \, \subseteq\,  J.
\]
\end{enumerate}
\end{definition}

\begin{example}
Let $n=4$, $\boldsymbol{\alpha}=(0, 1/2, 0, 0)$, and therefore 
$I_{\boldsymbol{\alpha}}=\{1,3,4\}$.
\begin{enumerate}
\item  Let $j=2$. From the condition 
$|\mathbf{k}^{neg} \cap \{1,3,4\}|\leq 2$, the space
$L^{\boldsymbol{\alpha}}_n (m,j)$ is spanned by all the monomials 
$\mathbf{x}^{\mathbf{k}}=x_1^{k_1}x_2^{k_2}x_3^{k_3}x_4^{k_4}$ 
in $L_n$ such that
\[
i) \ k_1 + k_2 + k_3 + k_4 = m \text{\ \ and \ }  
ii) \ \text{$k_1 \geq 0$ or $k_3 \geq 0$ or $k_4 \geq 0$}. 
\]
\item Let $J=\{1,4\}$. From the condition 
$( \mathbf{k}^{neg} \cap \{1,3,4\} ) \subset \{1,4\}$, 
$V^{\boldsymbol{\alpha}}_n (m,J)$ is the subspace of 
$L^{\boldsymbol{\alpha}}_n (m,j)$ spanned by all the monomials 
of degree $m$ with $k_3 \geq 0$.
\end{enumerate}
\end{example}

See more examples in Example \ref{example-2}. 
The following is easy to check.

\begin{lemma}\label{V-L-equal}
If $J=\varnothing$ (therefore $j=0$) or $J=I_{\boldsymbol{\alpha}}$ 
(therefore $j=| I_{\boldsymbol{\alpha}}|$), then 
\[
V^{\boldsymbol{\alpha}}_{n}(m,J) = L^{\boldsymbol{\alpha}}_{n}(m,j).
\]
\end{lemma}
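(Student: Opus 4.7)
The plan is to verify the claimed equality by comparing the monomial spanning sets of the two spaces in each of the two cases, since both $V^{\boldsymbol{\alpha}}_n(m,J)$ and $L^{\boldsymbol{\alpha}}_n(m,j)$ are by definition spanned by the Laurent monomials $\mathbf{x}^{\mathbf{k}}$ satisfying specified conditions on the exponent vector $\mathbf{k} \in \mathbb{Z}^n$. Both conditions fix $\sum_{\ell} \mathbf{k}[\ell] = m$, so all that needs to be matched is the condition on $\mathbf{k}^{neg} \cap I_{\boldsymbol{\alpha}}$.

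For the first case $J = \varnothing$ (so $j=0$), I would observe that $\mathbf{k}^{neg} \cap I_{\boldsymbol{\alpha}} \subseteq \varnothing$ is equivalent to $\mathbf{k}^{neg} \cap I_{\boldsymbol{\alpha}} = \varnothing$, which is in turn equivalent to $|\mathbf{k}^{neg} \cap I_{\boldsymbol{\alpha}}| \leq 0$. Hence the spanning sets of $V^{\boldsymbol{\alpha}}_n(m,\varnothing)$ and $L^{\boldsymbol{\alpha}}_n(m,0)$ coincide, giving the equality.

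For the second case $J = I_{\boldsymbol{\alpha}}$ (so $j = |I_{\boldsymbol{\alpha}}|$), I would note that since $\mathbf{k}^{neg} \cap I_{\boldsymbol{\alpha}}$ is by construction a subset of $I_{\boldsymbol{\alpha}}$, the condition $\mathbf{k}^{neg} \cap I_{\boldsymbol{\alpha}} \subseteq I_{\boldsymbol{\alpha}}$ holds automatically, and likewise $|\mathbf{k}^{neg} \cap I_{\boldsymbol{\alpha}}| \leq |I_{\boldsymbol{\alpha}}|$ holds automatically. Thus both $V^{\boldsymbol{\alpha}}_n(m, I_{\boldsymbol{\alpha}})$ and $L^{\boldsymbol{\alpha}}_n(m, |I_{\boldsymbol{\alpha}}|)$ are spanned by exactly the same set, namely all monomials $\mathbf{x}^{\mathbf{k}}$ with $\sum_{\ell}\mathbf{k}[\ell] = m$.

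There is no genuine obstacle here; the statement reduces to unwinding the set-theoretic definitions, which is why the authors flag it as \emph{easy to check}. The only mild subtlety worth mentioning explicitly is the tautology $\mathbf{k}^{neg} \cap I_{\boldsymbol{\alpha}} \subseteq I_{\boldsymbol{\alpha}}$ used in the second case, which is what makes both the ``subset'' formulation and the ``cardinality'' formulation trivially satisfied at the extreme value $j = |I_{\boldsymbol{\alpha}}|$.
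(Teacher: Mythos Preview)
Your proposal is correct and is exactly the routine unwinding of Definition~\ref{L-V-def} that the paper has in mind when it says the lemma is ``easy to check''; the paper gives no proof beyond that remark. There is nothing to add.
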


From Definition \ref{L-V-def}, it immediately follows that
\begin{equation}\label{basic-1}
L^{\boldsymbol{\alpha}}_n (m,j-1) \subseteq L^{\boldsymbol{\alpha}}_n (m,j) 
\quad \text{and} \quad
L^{\boldsymbol{\alpha}}_n (m,j) = \sum_{J:|J|=j} V^{\boldsymbol{\alpha}}_{n} (m,J) 
\end{equation}
where the summation runs over all subsets $J$ of $I_{\boldsymbol{\alpha}}$ 
with $|J|=j$. Also, for two subsets $J_{1}$ and $J_{2}$ of 
$I_{\boldsymbol{\alpha}}$, we have
\begin{align}\label{basic-3}
& V^{\boldsymbol{\alpha}}_n(m,J_{1}) \subset V^{\boldsymbol{\alpha}}_n(m,J_{2})
\quad \text{for $J_{1}\subset J_{2}$}; \\
& V^{\boldsymbol{\alpha}}_n(m, J_1 \cap J_2) 
 = V^{\boldsymbol{\alpha}}_n(m, J_1) \cap V^{\boldsymbol{\alpha}}_n(m, J_2). \notag
\end{align}

\medskip

Now we show that $V^{\boldsymbol{\alpha}}_n(m, J)$ and 
$L^{\boldsymbol{\alpha}}_n(m,j)$ are indeed modules  
over $\mathcal{U}_n$.

\begin{proposition}\label{VL-modules}
The spaces $V^{\boldsymbol{\alpha}}_{n}(m,J)$ and $L^{\boldsymbol{\alpha}}_n(m,j)$ 
are $\mathcal{U}_n$-submodules of $L_n^{\boldsymbol{\alpha}}$.
\end{proposition}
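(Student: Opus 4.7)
The plan is to observe first that since $\mathcal{U}_n$ is generated as an algebra by the matrix units $E_{ab}$, it suffices to check stability under each $E_{ab}$. Moreover, since $L^{\boldsymbol{\alpha}}_n(m,j)$ is exhibited in \eqref{basic-1} as a sum of the spaces $V^{\boldsymbol{\alpha}}_n(m,J)$ with $|J|=j$, its stability follows automatically from the stability of each $V^{\boldsymbol{\alpha}}_n(m,J)$. So the real content is to prove that $V^{\boldsymbol{\alpha}}_n(m,J)$ is stable under every $E_{ab}$.

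For this, I argue on a spanning monomial $\mathbf{x}^{\mathbf{k}} \in V^{\boldsymbol{\alpha}}_n(m,J)$. Setting $\mathbf{k}' = \mathbf{k} + \mathbf{e}_a - \mathbf{e}_b$, the image $E_{ab} \cdot \mathbf{x}^{\mathbf{k}} = (\mathbf{k}[b] + \boldsymbol{\alpha}[b])\, \mathbf{x}^{\mathbf{k}'}$ is either zero (in which case there is nothing to prove) or a nonzero multiple of $\mathbf{x}^{\mathbf{k}'}$. The degree condition $\sum_\ell \mathbf{k}'[\ell] = m$ is immediate since $\mathbf{e}_a - \mathbf{e}_b$ sums to zero. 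What remains is to verify the inclusion $\mathbf{k}'^{neg} \cap I_{\boldsymbol{\alpha}} \subseteq J$, which I would check by a short case analysis on the position $\ell$: for $\ell \neq b$ the entry $\mathbf{k}'[\ell]$ is either $\mathbf{k}[\ell]$ or $\mathbf{k}[\ell]+1$, so no new negative value can appear at such $\ell$, and the hypothesis $\mathbf{k}^{neg} \cap I_{\boldsymbol{\alpha}} \subseteq J$ takes care of indices that were already negative; for $\ell = b$ a new negative value can appear only when $\mathbf{k}[b] = 0$.

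The main obstacle is precisely this last sub-case, where the action looks capable of creating a fresh negative exponent at an index $b \in I_{\boldsymbol{\alpha}}$ that need not lie in $J$. The observation that rescues us is \eqref{integer-zero}: the coefficient $\mathbf{k}[b] + \boldsymbol{\alpha}[b]$ vanishes exactly when $b \in I_{\boldsymbol{\alpha}}$ and $\mathbf{k}[b] = 0$. In other words, in precisely the scenario in which the combinatorial side would push a zero entry at $b \in I_{\boldsymbol{\alpha}}$ down to $-1$, the algebraic side annihilates the monomial. All remaining sub-cases reduce to routine bookkeeping, and together they give $\mathbf{k}'^{neg} \cap I_{\boldsymbol{\alpha}} \subseteq J$, showing that $V^{\boldsymbol{\alpha}}_n(m,J)$ is $E_{ab}$-stable and completing the argument.
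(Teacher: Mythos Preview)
Your proof is correct and follows essentially the same approach as the paper: both reduce the claim for $L^{\boldsymbol{\alpha}}_n(m,j)$ to that for $V^{\boldsymbol{\alpha}}_n(m,J)$ via \eqref{basic-1}, then check $E_{ab}$-stability on monomials by observing that the only index where a new negative entry can arise is $b$, and that precisely when this would violate the condition (namely $\mathbf{k}[b]=0$ and $b\in I_{\boldsymbol{\alpha}}$) the coefficient $\mathbf{k}[b]+\boldsymbol{\alpha}[b]$ vanishes. The only cosmetic difference is that you invoke \eqref{integer-zero} as a biconditional, whereas the paper (and your actual argument) only needs the trivial implication that $b\in I_{\boldsymbol{\alpha}}$ and $\mathbf{k}[b]=0$ force the coefficient to be zero.
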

\begin{proof}
For a monomial $\mathbf{x}^{\mathbf{p}} \in V^{\boldsymbol{\alpha}}_{n}(m,J)$,
we need to show
\[
E_{ab} \cdot \mathbf{x}^{\mathbf{p}}
 = ( \mathbf{p}[b] + \boldsymbol{\alpha}[b] ) \, 
 \mathbf{x}^{\mathbf{p}+\mathbf{e}_a-\mathbf{e}_b}
\]
are in $V^{\boldsymbol{\alpha}}_{n}(m,J)$ for all $1 \leq a,b \leq n$. 
Since the action of $E_{ab}$ preserves the degree of monomials,
writing $\mathbf{q}=({\mathbf{p}+\mathbf{e}_a-\mathbf{e}_b})$, it is 
enough to show that for $a\ne b$  if 
$(\mathbf{p}^{neg} \cap I_{\boldsymbol{\alpha}}) \subseteq J$, then 
$(\mathbf{q}^{neg} \cap I_{\boldsymbol{\alpha}}) \subseteq J$ or 
the coefficient $(\mathbf{p}[b] + \boldsymbol{\alpha}[b])$ is zero.

For this, because the only element which is not in $\mathbf{p}^{neg}$ 
but can possibly appear in $\mathbf{q}^{neg}$ is $b$, 
it is enough to consider the case
\[
b \notin \mathbf{p}^{neg} \cap I_{\boldsymbol{\alpha}} 
  \quad \text{and} \quad b \in \mathbf{q}^{neg} \cap I_{\boldsymbol{\alpha}}.
\]
This happens only when $\mathbf{p}[b]=0$ and in this case, 
since $b\in I_{\boldsymbol{\alpha}}$, we have $\boldsymbol{\alpha}[b]=0$.
Therefore, the coefficient $( \mathbf{p}[b] + \boldsymbol{\alpha}[b] )$ is 
zero. Consequently, we have 
$E_{ab} \cdot \mathbf{x}^{\mathbf{p}} \in V^{\boldsymbol{\alpha}}_{n}(m,J)$  
for all $1 \leq a,b \leq n$, and $V^{\boldsymbol{\alpha}}_{n}(m,J)$ 
is a submodule of $L^{\boldsymbol{\alpha}}_n$.
Now from \eqref{basic-1}, $L^{\boldsymbol{\alpha}}_{n}(m,j)$ is  a submodule  of 
$L^{\boldsymbol{\alpha}}_n$.
\end{proof}

\medskip


\section{Structure of $V^{\boldsymbol{\alpha}}_{n}(m,J)$}


In this section, we investigate the structure of 
$V^{\boldsymbol{\alpha}}_{n}(m,J)$. We first give a technical lemma.

\begin{lemma}\label{technical-lemma}
For two distinct monomials $\mathbf{x}^{\mathbf{p}}$ and 
$\mathbf{x}^{\mathbf{q}}$ in $L^{\boldsymbol{\alpha}}_n(m,j)$ such that 
\[
\left( \mathbf{q}^{neg}\cap I_{\boldsymbol{\alpha}} \right) 
  \; \subseteq \; 
  \left( \mathbf{p}^{neg} \cap I_{\boldsymbol{\alpha}} \right),
\]
there exists $X\in\mathcal{U}_n$ such that 
$X\cdot\mathbf{x}^{\mathbf{p}}=\mathbf{x}^{\mathbf{q}}$.
\end{lemma}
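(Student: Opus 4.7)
The plan is to build a path of monomials $\mathbf{x}^{\mathbf{p}} = \mathbf{x}^{\mathbf{p}_0}, \mathbf{x}^{\mathbf{p}_1}, \ldots, \mathbf{x}^{\mathbf{p}_N} = \mathbf{x}^{\mathbf{q}}$ in $L_n^{\boldsymbol{\alpha}}$ with each step of the form $\mathbf{p}_k = \mathbf{p}_{k-1} + \mathbf{e}_{a_k} - \mathbf{e}_{b_k}$, and then take $X$ to be a suitably normalized product $E_{a_N b_N} \cdots E_{a_1 b_1}$. Since $E_{a_k b_k}$ sends $\mathbf{x}^{\mathbf{p}_{k-1}}$ to $(\mathbf{p}_{k-1}[b_k] + \boldsymbol{\alpha}[b_k])\, \mathbf{x}^{\mathbf{p}_k}$, a telescoping calculation gives $E_{a_N b_N} \cdots E_{a_1 b_1} \cdot \mathbf{x}^{\mathbf{p}} = C\, \mathbf{x}^{\mathbf{q}}$ with $C = \prod_{k=1}^N (\mathbf{p}_{k-1}[b_k] + \boldsymbol{\alpha}[b_k])$; provided $C \ne 0$, the element $X = C^{-1} E_{a_N b_N} \cdots E_{a_1 b_1}$ of $\mathcal{U}_n$ is what the lemma asks for.

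To choose the path, I set $\mathbf{d} = \mathbf{q} - \mathbf{p}$. Since $\mathbf{p}, \mathbf{q} \in L^{\boldsymbol{\alpha}}_n(m,j)$ share the degree $m$, we have $\sum_\ell \mathbf{d}[\ell] = 0$. Let $A = \{a : \mathbf{d}[a] > 0\}$ and $B = \{b : \mathbf{d}[b] < 0\}$. I list positions in $A$ with multiplicity $\mathbf{d}[a]$ and positions in $B$ with multiplicity $-\mathbf{d}[b]$ and pair them in any order to form the sequence of pairs $(a_k, b_k)$ for $k = 1, \ldots, N$. Under this monotone schedule each coordinate in $B$ is only decreased along the path and each coordinate in $A$ is only increased, so the values $\mathbf{p}_{k-1}[b_k]$ attained at the pulls from a fixed $b \in B$ are precisely the integers $\{\mathbf{q}[b]+1, \mathbf{q}[b]+2, \ldots, \mathbf{p}[b]\}$.

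The main obstacle is verifying $C \ne 0$. By the choice of $\boldsymbol{\alpha}$ in Notation \ref{alpha-I} and the observation \eqref{integer-zero}, a factor $\mathbf{p}_{k-1}[b_k] + \boldsymbol{\alpha}[b_k]$ vanishes only if $b_k \in I_{\boldsymbol{\alpha}}$ and $\mathbf{p}_{k-1}[b_k] = 0$, so it suffices to check that for each $b \in I_{\boldsymbol{\alpha}} \cap B$ the range $\{\mathbf{q}[b]+1, \ldots, \mathbf{p}[b]\}$ avoids $0$. Since $b \in B$ one has $\mathbf{p}[b] > \mathbf{q}[b]$; if $\mathbf{p}[b] < 0$ the entire range is negative, while if $\mathbf{p}[b] \geq 0$ then $b \notin \mathbf{p}^{neg} \cap I_{\boldsymbol{\alpha}}$, and the hypothesis $\mathbf{q}^{neg} \cap I_{\boldsymbol{\alpha}} \subseteq \mathbf{p}^{neg} \cap I_{\boldsymbol{\alpha}}$ forces $b \notin \mathbf{q}^{neg}$, hence $\mathbf{q}[b] \geq 0$ and the range is entirely positive. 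Either way $0$ is avoided, so $C \ne 0$ and the construction yields the desired $X \in \mathcal{U}_n$.
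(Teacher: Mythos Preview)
Your proof is correct and follows essentially the same approach as the paper's: both construct a product of elementary operators $E_{ab}$ that moves $\mathbf{x}^{\mathbf{p}}$ to $\mathbf{x}^{\mathbf{q}}$ along a path decreasing only coordinates in $B=\{\ell:\mathbf{p}[\ell]>\mathbf{q}[\ell]\}$, then verify the scalar coefficient is nonzero via the hypothesis $\mathbf{q}^{neg}\cap I_{\boldsymbol{\alpha}}\subseteq\mathbf{p}^{neg}\cap I_{\boldsymbol{\alpha}}$ together with \eqref{integer-zero}. The paper fixes a particular ordering $s_1\le\cdots\le s_r$ of the decrement positions while you allow an arbitrary pairing, but since $A\cap B=\varnothing$ the $b$-coordinate history is independent of the ordering and both arguments yield the same product of factors $\prod_{b\in B}\prod_{c=0}^{\mathbf{p}[b]-\mathbf{q}[b]-1}(\mathbf{p}[b]-c+\boldsymbol{\alpha}[b])$.
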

\begin{proof}
For simplicity, we let $\mathbf{p}[\ell]=p_{\ell}$, 
$\mathbf{q}[\ell]=q_{\ell}$, and 
$\boldsymbol{\alpha}[\ell]=\alpha_{\ell}$ for all $\ell$.
Consider the difference 
$\mathbf{p-q}=(p_1 - q_1 , ... , p_n - q_n ) \in \mathbb{Z}^{n}$. 
Since  $\mathbf{x}^{\mathbf{p}}$ and $\mathbf{x}^{\mathbf{q}}$ 
have the same degree $m$, we have 
$\sum_{\ell=1}^{n}{(p_{\ell} - q_{\ell})}=0$ and therefore 
we can separate the positive and negative parts of $\mathbf{p-q}$
\[
r =\sum_{\ell : p_{\ell} - q_{\ell} > 0}{(p_{\ell} - q_{\ell})}
 =\sum_{\ell : p_{\ell} - q_{\ell} < 0}{(q_{\ell} - p_{\ell})}.
\]
With Notation \ref{notation-exp-e}, we define
$1\leq s_1 \leq s_2 \leq \cdots \leq s_r \leq n$
and $1 \leq t_1 \leq t_2 \leq \cdots \leq t_r \leq n$ so that
\begin{align*}
\sum_{k=1}^{r}{\mathbf{e}_{s_k}}=\sum_{\ell:p_\ell-q_\ell>0}{(p_{\ell}-q_{\ell}) \, \mathbf{e}_\ell}
\quad \text{ and } \quad 
\sum_{k=1}^{r}{\mathbf{e}_{t_k}}=\sum_{\ell:p_\ell-q_\ell<0}{(q_{\ell}-p_{\ell}) \, \mathbf{e}_\ell}
\end{align*}
where the summations are over $\ell$ such that 
$p_{\ell}-q_{\ell} >0$ and $p_{\ell}-q_{\ell}<0$ respectively.

Setting $\mathbf{p}_0=\mathbf{p}$ and 
$\mathbf{p}_{k}=\mathbf{p}_{k-1}+\mathbf{e}_{t_{k}}-\mathbf{e}_{s_{k}}$ 
for $1\le k \le r$, we have 
$E_{t_k s_k}\cdot\mathbf{x}^{\mathbf{p}_{k-1}}= w_k \mathbf{x}^{\mathbf{p}_k}$. 
Furthermore, from  
\begin{align*}
\mathbf{p}_r
&=\mathbf{p}_0+\sum_{k=1}^{r}(\mathbf{e}_{t_k}-\mathbf{e}_{s_k})\\
&=\mathbf{p}_0+\sum_{\ell:p_\ell-q_\ell<0}{(q_{\ell}-p_{\ell})\mathbf{e}_\ell} 
  - \sum_{\ell:p_\ell-q_\ell>0}{(p_{\ell}-q_{\ell})\mathbf{e}_\ell}\\
&=\mathbf{p}_0+(\mathbf{q}-\mathbf{p})=\mathbf{q},
\end{align*}
by setting $Y=\prod_{k=1}^{r}{E_{t_k s_k}} \in \mathcal{U}_n$, we obtain
\begin{align*}
Y \cdot\mathbf{x}^{\mathbf{p}}
&=\mathbf{x}^{\mathbf{p}}(w_1 x_{t_1} x_{s_1}^{-1})
     (w_2 x_{t_2} x_{s_2}^{-1})\cdots(w_r x_{t_r} x_{s_r}^{-1})\\
&=\left(\prod_{k=1}^{r}{w_k}\right)\mathbf{x}^{\mathbf{p}}\mathbf{x}^{\mathbf{q-p}}
 =\left(\prod_{k=1}^{r}{w_k}\right)\mathbf{x}^{\mathbf{q}}
\end{align*}
where the coefficient is 
\[
\prod_{k=1}^{r}{w_k}
= \prod_{{\ell} : p_{\ell} - q_{\ell} >0} {({p}_{\ell}
 + {\alpha}_{\ell}) ({p}_{\ell} - 1 + {\alpha}_{\ell}) 
 \cdots ({p}_{\ell}  - ({p}_{\ell} - {q}_{\ell} -1) + {\alpha}_{\ell})}.
\]

For each $\ell$ in the above product, 
if ${\ell} \notin I_{\boldsymbol{\alpha}}$ then ${\alpha}_{\ell} \ne 0$, 
and therefore the corresponding factor is not zero by \eqref{integer-zero}.
Now let ${\ell} \in I_{\boldsymbol{\alpha}}$ and therefore ${\alpha}_{\ell}=0$.
To derive a contradiction, suppose $(p_{\ell}-c)$ in the $\ell$th factor of 
the above product is zero for some $0 \leq c \leq (p_{\ell}-q_{\ell}-1)$. 
Then, from $p_{\ell}=c$, we have 
\begin{equation}\label{p-range}
0 \leq p_{\ell} \leq (p_{\ell}-q_{\ell}-1)
\end{equation} 
and therefore $q_{\ell} \leq -1$. On the other hand, from the given hypothesis 
$\left( \mathbf{q}^{neg}\cap I_{\boldsymbol{\alpha}} \right) 
  \; \subseteq \; \left( \mathbf{p}^{neg} \cap I_{\boldsymbol{\alpha}} \right)$, 
we know that $p_{\ell} <0$ whenever $q_{\ell}<0$, which contradicts 
to \eqref{p-range}. Therefore, we have $\prod_{k=1}^{r}{w_k} \ne 0$ 
and with the element
\[
X= \prod_{k=1}^{r} w_k^{-1} {E_{t_k s_k}}
\]
we see that $X \cdot \mathbf{x}^{\mathbf{p}}=\mathbf{x}^{\mathbf{q}}$.
\end{proof}

\begin{example}\label{example-2}
\begin{enumerate}
\item Let $\boldsymbol{\alpha}=(1/2, i, 0)$, 
$I_{\boldsymbol{\alpha}}=\{ 3\}$, and $J=\varnothing$. 
If $m=4$ then from the condition \[
\mathbf{k}^{neg} \cap \{ 3 \} \subseteq \varnothing, 
\] 
$V^{\boldsymbol{\alpha}}_n(m,J)$ is spanned by all 
the monomials $x_1^{k_1} x_2^{k_2} x_3^{k_3}$ of degree $4$ 
with $k_3 \geq 0$.
 
Note that $\mathbf{x}^{\mathbf{p}}=x_1^4$ and 
$\mathbf{x}^{\mathbf{q}}=x_1 x_2^{-2} x_3^{5}$ in 
$V^{\boldsymbol{\alpha}}_n(m,J)$ satisfy the condition in 
Lemma \ref{technical-lemma}. From
\[
\mathbf{p}-\mathbf{q}=(3, 2, -5)=(3,2,0) + (0,0, -5)
\]
we define the element 
$Y=E_{31} \cdot E_{31} \cdot E_{31} \cdot E_{32} \cdot E_{32}
 \in \mathcal{U}_n$ to obtain
\[
Y \cdot \mathbf{x}^{\mathbf{p}}=(2 + 1/2)(3 + 1/2)(4 + 1/2)(-1 +i)(0+i) 
 \, \mathbf{x}^{\mathbf{q}}.
\]
Therefore there is $X  \in \mathcal{U}_n$ such that 
$X \cdot \mathbf{x}^{\mathbf{p}}= \mathbf{x}^{\mathbf{q}}$.

\item Let $\alpha=(0, 0, 0)$, $ I_{\boldsymbol{\alpha}}=\{1, 2, 3\}$, 
and $J=\{ 1,3\}$. If $m=-2$ then from the condition 
\[
\mathbf{k}^{neg} \cap \{1,2,3 \} \subseteq \{1,3 \},
\]
$V^{\boldsymbol{\alpha}}_n(m,J)$ is spanned by 
all the monomials $x_1^{k_1} x_2^{k_2} x_3^{k_3}$ 
of degree $-2$ with $k_2 \geq 0$. 

Note that $\mathbf{x}^{\mathbf{p}}=x_1^{-1} x_3^{-1}$ and 
$\mathbf{x}^{\mathbf{q}}=x_1 x_2^2 x_3^{-5}$ in  
$V^{\boldsymbol{\alpha}}_n(m,J)$ satisfy the condition in 
Lemma \ref{technical-lemma}. From
\[
\mathbf{p}-\mathbf{q}=(-2, -2, 4) = (0, 0, 4) + (-2,-2,0)
\] 
we define $Y=E_{13} \cdot E_{13} \cdot E_{23} \cdot E_{23} 
 \in \mathcal{U}_n$ to obtain
\[
Y \cdot \mathbf{x}^{\mathbf{p}} = (-4 + 0)(-3 +0)(-2 + 0)(-1 +0)
 \, \mathbf{x}^{\mathbf{q}}.
\]
Therefore, there is $X  \in \mathcal{U}_n$ such that 
$X \cdot \mathbf{x}^{\mathbf{p}}= \mathbf{x}^{\mathbf{q}}$.
\end{enumerate}
\end{example}

\medskip

Now we investigate the structure of $V^{\boldsymbol{\alpha}}_n(m,J)$
for $\boldsymbol{\alpha} \ne \mathbf{0}$.

\begin{theorem}
[Structure of $V^{\boldsymbol{\alpha}}_n(m,J)$ 
    with nonzero $\boldsymbol{\alpha}$]\label{V-nonzero-alpha}
\ 

Let $\boldsymbol{\alpha} \ne \mathbf{0}$ and 
$J=\{\ell_1,...,\ell_j \} \subseteq I_{\boldsymbol{\alpha}}$.
Then, $V^{\boldsymbol{\alpha}}_n(m,J)$ is the cyclic submodule of 
$L^{\boldsymbol{\alpha}}_n$ generated by
\[
\mathbf{x}_{J,t} = x_{\ell_1}^{-1} x_{\ell_2}^{-1} \cdots x_{\ell_j}^{-1} x_{t}^{m+j}
\quad \text{for some $t\in \{1,2,...,n \} \setminus I_{\boldsymbol{\alpha}}$}.
\]
\end{theorem}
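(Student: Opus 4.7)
The plan is to prove the two inclusions $\langle\mathbf{x}_{J,t}\rangle\subseteq V^{\boldsymbol{\alpha}}_n(m,J)$ and $V^{\boldsymbol{\alpha}}_n(m,J)\subseteq\langle\mathbf{x}_{J,t}\rangle$. The first is a matter of unpacking the definition of $V^{\boldsymbol{\alpha}}_n(m,J)$, while the second is a direct application of Lemma \ref{technical-lemma}, with $\mathbf{x}_{J,t}$ playing the role of the source monomial $\mathbf{x}^{\mathbf{p}}$.

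First I would note that the hypothesis $\boldsymbol{\alpha}\ne\mathbf{0}$ is exactly what guarantees that $\{1,\ldots,n\}\setminus I_{\boldsymbol{\alpha}}$ is nonempty, so a valid choice of $t$ exists; moreover $t\ne\ell_i$ for every $i$, since $\ell_i\in J\subseteq I_{\boldsymbol{\alpha}}$. Writing $\mathbf{x}_{J,t}=\mathbf{x}^{\mathbf{p}}$ with $\mathbf{p}=-\mathbf{e}_{\ell_1}-\cdots-\mathbf{e}_{\ell_j}+(m+j)\mathbf{e}_t$, the components of $\mathbf{p}$ sum to $m$, and the strictly negative entries of $\mathbf{p}$ are $\{\ell_1,\ldots,\ell_j\}$, together with $t$ in case $m+j<0$. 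Because $t\notin I_{\boldsymbol{\alpha}}$, in either event $\mathbf{p}^{neg}\cap I_{\boldsymbol{\alpha}}=J$, which gives $\mathbf{x}_{J,t}\in V^{\boldsymbol{\alpha}}_n(m,J)$ and therefore $\langle\mathbf{x}_{J,t}\rangle\subseteq V^{\boldsymbol{\alpha}}_n(m,J)$.

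For the reverse containment, I would let $\mathbf{x}^{\mathbf{q}}$ be an arbitrary spanning monomial of $V^{\boldsymbol{\alpha}}_n(m,J)$. By definition $\mathbf{q}$ has degree $m$ and $\mathbf{q}^{neg}\cap I_{\boldsymbol{\alpha}}\subseteq J=\mathbf{p}^{neg}\cap I_{\boldsymbol{\alpha}}$, so both $\mathbf{x}^{\mathbf{p}}$ and $\mathbf{x}^{\mathbf{q}}$ lie in $L^{\boldsymbol{\alpha}}_n(m,j)$ and the hypotheses of Lemma \ref{technical-lemma} are met. That lemma produces an $X\in\mathcal{U}_n$ with $X\cdot\mathbf{x}_{J,t}=\mathbf{x}^{\mathbf{q}}$ (taking $X=1$ in the trivial case $\mathbf{q}=\mathbf{p}$). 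Since such monomials $\mathbf{x}^{\mathbf{q}}$ span $V^{\boldsymbol{\alpha}}_n(m,J)$ over $\mathbb{C}$, this forces $V^{\boldsymbol{\alpha}}_n(m,J)\subseteq\langle\mathbf{x}_{J,t}\rangle$ and closes the argument.

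The statement is essentially a clean repackaging of Lemma \ref{technical-lemma}, so I do not anticipate a real obstacle. The only subtle point is the choice of exponent $m+j$ on $x_t$: it is forced by the requirement that the cyclic generator have total degree $m$ while prescribing exactly $J$ as its negative support \emph{inside} $I_{\boldsymbol{\alpha}}$. It is precisely the hypothesis $\boldsymbol{\alpha}\ne\mathbf{0}$, which makes some index $t\notin I_{\boldsymbol{\alpha}}$ available, that allows the possible negativity of this single compensating variable to be invisible to the filtration; the case $\boldsymbol{\alpha}=\mathbf{0}$ is accordingly treated separately elsewhere in the paper.
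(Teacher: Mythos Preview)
Your proposal is correct and follows essentially the same route as the paper: verify that $\mathbf{x}_{J,t}\in V^{\boldsymbol{\alpha}}_n(m,J)$, then apply Lemma~\ref{technical-lemma} with $\mathbf{p}^{neg}\cap I_{\boldsymbol{\alpha}}=J$ to reach any spanning monomial $\mathbf{x}^{\mathbf{q}}$. If anything, your write-up is slightly more careful than the paper's, since you explicitly check the generator lies in the module and handle the degenerate case $\mathbf{q}=\mathbf{p}$ (Lemma~\ref{technical-lemma} assumes the monomials are distinct).
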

\begin{proof}
Since  $\boldsymbol{\alpha} \ne \mathbf{0}$, there exists $t$ 
such that $\boldsymbol{\alpha}[t] \ne 0$. Write 
$\mathbf{x}^{\mathbf{p}}$ for $\mathbf{x}_{J,t}$ and 
let $\mathbf{x}^{\mathbf{q}}$ be an arbitrary monomial in 
$V^{\mathbf{\boldsymbol{\alpha}}}_n(m,J)$.
Since $(\mathbf{q}^{neg}\cap  I_{\boldsymbol{\alpha}})
 \subseteq (\mathbf{p}^{neg} \cap  I_{\boldsymbol{\alpha}})=J$, 
we can apply Lemma \ref{technical-lemma} to obtain 
$X \in \mathcal{U}_n$ such that $X \cdot \mathbf{x}^{\mathbf{p}}
 = \mathbf{x}^{\mathbf{q}}$. Therefore, $\mathbf{x}^{\mathbf{p}}$  
generates the module $V^{\boldsymbol{\alpha}}_n(m,J)$.
\end{proof}

\medskip

Next we consider the other cases with 
$\boldsymbol{\alpha} = \mathbf{0}$. Let us fix
\[
J=\{\ell_1,...,\ell_j \} \subseteq I_{\boldsymbol{\alpha}}=\{1,2,...,n \}.
\]

\begin{theorem}[Structure of $V^{\mathbf{0}}_n(m,J)$ 
    with nonnegative degree $m$]\label{V-cyclic-plus} 
\ 
\begin{enumerate}
\item If $m \geq 0$ and $0 \leq j \leq n-1$, then 
$V^{\mathbf{0}}_n(m,J)$ is the cyclic submodule of 
$L^{\mathbf{0}}_n(m,j)$ generated by 
\[
\mathbf{x}_{J,t} = x_{\ell_1}^{-1} x_{\ell_2}^{-1} \cdots 
 x_{\ell_j}^{-1} x_{t}^{m+j} 
 \quad \text{for some $t\in \{1,2,...,n \} \setminus J$}.
\]
\item If $m \geq 0$ and $j=n$ (therefore, $J=\{1,2,...,n \}$), 
then 
\begin{align*}
V^{\mathbf{0}}_n(m,\{1,2,...,n \}) 
 & = L^{\mathbf{0}}_n(m,n) = L^{\mathbf{0}}_n(m,n-1) \\
 & = \sum_{J'} V^{\mathbf{0}}_n(m,J')
\end{align*}
where the summation is over all the subsets $J'$ of 
$\{1,2,...,n \}$ with $|J'|=n-1$. 
\end{enumerate}
\end{theorem}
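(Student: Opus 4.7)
The plan is to deduce both parts directly from earlier machinery in the paper: Part~(1) from Lemma~\ref{technical-lemma} together with a check that the proposed generator lies in the right subspace, and Part~(2) from Lemma~\ref{V-L-equal}, the basic identity \eqref{basic-1}, and a one-line degree inequality.

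For Part~(1), write $\mathbf{p}$ for the exponent vector of $\mathbf{x}_{J,t}$. I would first verify that $\mathbf{x}_{J,t}\in V^{\mathbf{0}}_{n}(m,J)$: its total degree is $-j+(m+j)=m$, and because $t\notin J$ and $m+j\geq 0$ (using $m\geq 0$ and $j\geq 0$), the negative support of $\mathbf{p}$ is exactly $J$. In particular $\mathbf{p}^{neg}\cap I_{\boldsymbol{\alpha}}=J$, recalling that $I_{\boldsymbol{\alpha}}=\{1,\dots,n\}$ when $\boldsymbol{\alpha}=\mathbf{0}$. Now for any basis monomial $\mathbf{x}^{\mathbf{q}}\in V^{\mathbf{0}}_{n}(m,J)$, the defining condition gives $\mathbf{q}^{neg}\cap I_{\boldsymbol{\alpha}}\subseteq J=\mathbf{p}^{neg}\cap I_{\boldsymbol{\alpha}}$, so Lemma~\ref{technical-lemma} produces $X\in\mathcal{U}_{n}$ with $X\cdot\mathbf{x}_{J,t}=\mathbf{x}^{\mathbf{q}}$. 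This shows $\langle\mathbf{x}_{J,t}\rangle\supseteq V^{\mathbf{0}}_{n}(m,J)$, and the reverse containment is Proposition~\ref{VL-modules}.

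For Part~(2), the equality $V^{\mathbf{0}}_{n}(m,\{1,\dots,n\})=L^{\mathbf{0}}_{n}(m,n)$ is the special case $J=I_{\boldsymbol{\alpha}}$ of Lemma~\ref{V-L-equal}. The equality $L^{\mathbf{0}}_{n}(m,n)=L^{\mathbf{0}}_{n}(m,n-1)$ is the only nontrivial point and follows from a single sign argument: if a monomial $\mathbf{x}^{\mathbf{k}}$ of total degree $m\geq 0$ had $|\mathbf{k}^{neg}\cap I_{\boldsymbol{\alpha}}|=n$, then every $\mathbf{k}[\ell]$ would be negative, hence $\sum_{\ell}\mathbf{k}[\ell]\leq -n<0\leq m$, contradicting $\sum_{\ell}\mathbf{k}[\ell]=m$. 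Thus every spanning monomial has at most $n-1$ negative entries and lies in $L^{\mathbf{0}}_{n}(m,n-1)$. The third equality then is \eqref{basic-1} applied with $j=n-1$.

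The proof really has no major obstacle; the statement is a clean corollary of the technical lemma and of the definitions. The only places needing care are (i) checking that the exponent $m+j$ on $x_{t}$ is nonnegative so that $\mathbf{x}_{J,t}$ has negative support exactly $J$ — this is what makes the containment of negative supports go in the direction required by Lemma~\ref{technical-lemma} — and (ii) recognising that the hypothesis $m\geq 0$ is exactly what forbids an all-negative exponent vector, collapsing the top level of the filtration in Part~(2).
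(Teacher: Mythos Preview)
Your proposal is correct and follows essentially the same approach as the paper's proof: Part~(1) via Lemma~\ref{technical-lemma} after checking $\mathbf{p}^{neg}=J$, and Part~(2) via the degree argument together with Lemma~\ref{V-L-equal} and \eqref{basic-1}. Your write-up is slightly more explicit (e.g.\ invoking Proposition~\ref{VL-modules} for the reverse containment and spelling out why $m+j\geq 0$), but the underlying argument is the same.
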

\begin{proof}
For Statement (1), we first note that 
$\mathbf{x}_{J,t} \in V^{\mathbf{0}}_n(m,J)$. Write 
$\mathbf{x}^{\mathbf{p}}$ for $\mathbf{x}_{J,t}$ and 
let $\mathbf{x}^{\mathbf{q}}$ be an arbitrary monomial 
in $V^{\mathbf{0}}_n(m,J)$. 
Since $\mathbf{q}^{neg} \subseteq \mathbf{p}^{neg}=J$, 
applying Lemma \ref{technical-lemma}, we see that there exists
$X \in \mathcal{U}_n$ such that 
$X \cdot \mathbf{x}^{\mathbf{p}} = \mathbf{x}^{\mathbf{q}}$. 
Therefore, $\mathbf{x}^{\mathbf{q}}$ belongs to the module 
generated by $\mathbf{x}_{J,t}$ and we have
$V^{\mathbf{0}}_n(m,J) = \langle \mathbf{x}_{J,t} \rangle$.

For Statement (2), if  $x_1^{k_1} x_2^{k_2} \cdots x_n^{k_n}$ is 
a monomial in $L^{\mathbf{0}}_n(m,n)$, then not all $k_j$'s can be negative, 
because the degree $m$ is nonnegative.
Therefore, $L^{\mathbf{0}}_n(m,n) = L^{\mathbf{0}}_n(m,n-1)$. 
The other equalities follow from Lemma \ref{V-L-equal} and \eqref{basic-1}.
\end{proof}

\begin{theorem}[Structure of $V^{\mathbf{0}}_n(-m,J)$ 
          with negative degree $-m$] \label{V-cyclic-minus}
\ 
 
Let $1 \leq m < n$.
\begin{enumerate}
\item[1a)] If $j=0$ (therefore, $J=\varnothing$), then 
\[
V^{\mathbf{0}}_{n}(-m,\varnothing)=L^{\mathbf{0}}_{n}(-m,0)=\{ 0\}.
\]

\item[1b)] If $1 \leq j \leq m$, then $V^{\mathbf{0}}_{n}(-m,J)$ is 
the cyclic submodule of $L^{\mathbf{0}}_{n}(-m,j)$ generated by 
\[
\mathbf{x}_J = x_{\ell_1}^{-1} x_{\ell_2}^{-1}
        \cdots x_{\ell_{j-1}}^{-1} x_{\ell_j}^{j-1-m}.
\]

\item[1c)] If $m+1 \leq j \leq n-1$, then $V^{\mathbf{0}}_n(-m,J)$ is  
the cyclic submodule of $L^{\mathbf{0}}_n(-m,j)$ generated by 
\[
\mathbf{x}_{J,t}=x_{\ell_1}^{-1} x_{\ell_2}^{-1}
        \cdots x_{\ell_{j-1}}^{-1} x_{\ell_j}^{-1} x_{t}^{j-m}
\quad \text{for some $t \in \{1,2,...,n \} \setminus J$}.
\]

\item[1d)] 
If $j=n$ (therefore, $J=\{1,2,...,n \}$), then 
\begin{align*}
V^{\mathbf{0}}_{n}(-m,\{1,2,...,n \})  
 & = L^{\mathbf{0}}_{n}(-m,n)=  L^{\mathbf{0}}_{n}(-m,n-1) \\
 & = \sum_{J'}   V^{\mathbf{0}}_{n}(-m,J') 
\end{align*}
where the summation is over all 
$J' \subset \{1,2,...,n \}$ with $|J'|= n-1$.
\end{enumerate}

Now let $m \geq n$.
\begin{enumerate}
\item[2a)] If $j=0$ (therefore, $J=\varnothing$), then 
\[
V^{\mathbf{0}}_n(-m,\varnothing)=L^{\mathbf{0}}_n(-m,0)=\{ 0\}.
\]
\item[2b)] 
If $1 \leq j  \leq n$, then $V^{\mathbf{0}}_n(-m,J)$ is 
the cyclic submodule of $L^{\mathbf{0}}_n(-m,j)$ generated by
\[
\mathbf{x}_J = x_{\ell_1}^{-1} x_{\ell_2}^{-1} \cdots 
                  x_{\ell_{j-1}}^{-1} x_{\ell_j}^{j-1-m}.
\]
\end{enumerate}
\end{theorem}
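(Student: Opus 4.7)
The plan is to mirror the argument used for Theorem \ref{V-cyclic-plus}: in each non-degenerate case I would exhibit an explicit monomial $\mathbf{x}^{\mathbf{p}} \in V^{\mathbf{0}}_n(-m,J)$ whose negative part is exactly $J$, and then invoke Lemma \ref{technical-lemma}. Since $\boldsymbol{\alpha}=\mathbf{0}$ gives $I_{\boldsymbol{\alpha}}=\{1,\dots,n\}$, the hypothesis of that lemma reduces to $\mathbf{q}^{neg} \subseteq \mathbf{p}^{neg}$, which is automatic as soon as $\mathbf{p}^{neg} = J$ and $\mathbf{x}^{\mathbf{q}} \in V^{\mathbf{0}}_n(-m,J)$. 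Before doing anything else, I would dispose of the degenerate cases 1a and 2a at once: a monomial in $V^{\mathbf{0}}_n(-m,\varnothing)$ has all exponents non-negative and hence non-negative total degree, which is incompatible with degree $-m<0$; so the space is $\{0\}$.

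For the cyclic statements 1b, 1c, and 2b I would use a single template, splitting only according to whether there is an index $t \notin J$ on which the residual degree $j - m$ can be placed without turning it negative. In case 1c, where $m+1 \le j \le n-1$, the quantity $j-m$ is strictly positive, so $\mathbf{x}_{J,t} = x_{\ell_1}^{-1} \cdots x_{\ell_j}^{-1} x_t^{j-m}$ has negative part exactly $J$ and total degree $-j + (j-m) = -m$. In cases 1b and 2b, where $j \le m$, such a $t$ would receive a non-positive exponent, so instead I would absorb the excess into the last element of $J$, taking $\mathbf{x}_J = x_{\ell_1}^{-1} \cdots x_{\ell_{j-1}}^{-1} x_{\ell_j}^{j-1-m}$. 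The inequality $j-1-m \le -1$ forces $\ell_j \in \mathbf{x}_J^{neg}$, so $\mathbf{x}_J^{neg} = J$, and the degree is again $-m$. In either situation Lemma \ref{technical-lemma} supplies, for every $\mathbf{x}^{\mathbf{q}} \in V^{\mathbf{0}}_n(-m,J)$, some $X \in \mathcal{U}_n$ with $X \cdot \mathbf{x}^{\mathbf{p}} = \mathbf{x}^{\mathbf{q}}$, proving cyclicity.

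Case 1d, where $1 \le m < n$ and $J = \{1,\dots,n\}$, requires a separate pigeonhole argument. If a monomial of degree $-m$ had every exponent strictly negative, its degree would be at most $-n$, contradicting $-m > -n$; hence at least one exponent is non-negative, which gives $L^{\mathbf{0}}_n(-m,n) = L^{\mathbf{0}}_n(-m,n-1)$. The remaining equalities $V^{\mathbf{0}}_n(-m,\{1,\dots,n\}) = L^{\mathbf{0}}_n(-m,n)$ and $L^{\mathbf{0}}_n(-m,n-1) = \sum_{|J'|=n-1} V^{\mathbf{0}}_n(-m,J')$ are instances of Lemma \ref{V-L-equal} and \eqref{basic-1}, respectively.

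I expect the only real obstacle to be organizational: the generator must be chosen so that its negative part is \emph{exactly} $J$, and whether the residual degree should be concentrated in a variable outside $J$ (case 1c) or inside $J$ (cases 1b and 2b) is dictated by the sign of $j - m$. This is precisely why the theorem splits into the subcases it does; once the correct $\mathbf{x}^{\mathbf{p}}$ is identified, Lemma \ref{technical-lemma} does the rest of the work.
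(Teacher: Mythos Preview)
Your proposal is correct and follows essentially the same approach as the paper: the paper's proof is terse, saying that 1a), 2a) follow from the definition, 1d) follows from the pigeonhole observation $-m > -n$ together with Lemma \ref{V-L-equal} and \eqref{basic-1}, and the remaining cases ``can be shown similarly to Theorem \ref{V-nonzero-alpha} and Theorem \ref{V-cyclic-plus} (1)''---which is exactly what you spell out. Your added remark that the case split is governed by the sign of $j-m$ (determining whether the residual degree goes on an index inside or outside $J$ so that $\mathbf{p}^{neg}=J$) is a helpful clarification the paper leaves implicit.
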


\begin{proof}
Statements 1a) and 2a) follow directly from Definition \ref{L-V-def}. 
For Statement 1d), note that if 
$\mathbf{x}^{\mathbf{k}} \in L^{\mathbf{0}}_{n}(-m,n)$ then 
$\sum_{\ell} \mathbf{k}[\ell] = -m > -n$ and therefore
there should be at least one $\ell$ with $\mathbf{k}[\ell] \geq 0$.
This implies that $L^{\mathbf{0}}_{n}(-m,n) = L^{\mathbf{0}}_{n}(-m,n-1)$. 
Now the statements follows from Lemma \ref{V-L-equal} and \eqref{basic-1}.
The other statements can be shown similarly to 
Theorem \ref{V-nonzero-alpha}  and Theorem \ref{V-cyclic-plus} (1).
\end{proof}

\begin{remark}\label{rmk-gen}
Let $\mathbf{x}^{\mathbf{p}}$ be the generators $\mathbf{x}_J$ or 
$\mathbf{x}_{J,t}$ of the cyclic modules $V^{\boldsymbol{\alpha}}_{n}(m,J)$ 
given in Theorem \ref{V-nonzero-alpha}, Theorem \ref{V-cyclic-plus}, and 
Theorem \ref{V-cyclic-minus}. We remark that these generators are 
not unique. This is because, when applying Lemma \ref{technical-lemma}, if  
\begin{equation}\label{cap-q-p-neg}
\left( \mathbf{q}^{neg}\cap I_{\boldsymbol{\alpha}} \right) 
  \; = \; \left( \mathbf{p}^{neg} \cap I_{\boldsymbol{\alpha}} \right),
\end{equation}
then we can exchange the roles of $\mathbf{x}^{\mathbf{p}}$ 
and $\mathbf{x}^{\mathbf{q}}$. Therefore, every monomial 
$\mathbf{x}^{\mathbf{q}} \in V^{\boldsymbol{\alpha}}_{n}(m,J)$ 
satisfying \eqref{cap-q-p-neg} can also generate the module 
$V^{\boldsymbol{\alpha}}_{n}(m,J)$.
\end{remark}

\medskip


\section{Structure of $L^{\boldsymbol{\alpha}}_{n}(m,j)$} \label{sec-L}


In this section, we investigate the structure of $L^{\boldsymbol{\alpha}}_n(m,j)$. 
Let us begin with another technical lemma.

\begin{lemma}\label{different-weight}
For an element $f=\sum^{r}_{i=1} c_i \mathbf{x}^{\mathbf{k}_i}$ of 
$L_{n}^{\boldsymbol{\alpha}}$ with distinct monomials and nonzero 
coefficients, the cyclic module generated by $f$ includes
the cyclic modules generated by the terms of $f$
\[
\langle \mathbf{x}^{\mathbf{k}_i} \rangle \subseteq \langle f \rangle
\quad \text{for all $i$}.
\] 
\end{lemma}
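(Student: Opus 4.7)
The plan is to exploit the fact that each Laurent monomial $\mathbf{x}^{\mathbf{k}_i}$ is a simultaneous eigenvector for the Cartan subalgebra $\mathfrak{h} \subset \mathfrak{gl}(n)$ of diagonal matrices, with $E_{\ell\ell} \cdot \mathbf{x}^{\mathbf{k}_i} = (\mathbf{k}_i[\ell] + \boldsymbol{\alpha}[\ell])\,\mathbf{x}^{\mathbf{k}_i}$. Since the vectors $\mathbf{k}_i$ are pairwise distinct, their associated weights $\mathbf{k}_i + \boldsymbol{\alpha}$ are pairwise distinct, so a standard weight-space separation argument should isolate each $\mathbf{x}^{\mathbf{k}_i}$ inside $\langle f \rangle$.

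Concretely, I would first choose a single element $h = \sum_{\ell=1}^{n} t_{\ell} E_{\ell\ell} \in \mathcal{U}_n$ with scalars $t_{\ell} \in \mathbb{C}$ such that the numbers
\[
\lambda_i \; = \; \sum_{\ell=1}^{n} t_{\ell} \bigl( \mathbf{k}_i[\ell] + \boldsymbol{\alpha}[\ell] \bigr), \qquad 1 \leq i \leq r,
\]
are pairwise distinct. Such a choice of $(t_1,\ldots,t_n)$ exists because, for each pair $i \ne j$, the condition $\lambda_i = \lambda_j$ cuts out the proper hyperplane $\sum_{\ell}t_{\ell}(\mathbf{k}_i[\ell] - \mathbf{k}_j[\ell]) = 0$ in $\mathbb{C}^n$ (proper since $\mathbf{k}_i \ne \mathbf{k}_j$), and a finite union of such hyperplanes cannot cover $\mathbb{C}^n$. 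By construction $h \cdot \mathbf{x}^{\mathbf{k}_i} = \lambda_i \mathbf{x}^{\mathbf{k}_i}$.

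Next, for a fixed index $i$, I would apply the element $X_i = \prod_{j \ne i}(h - \lambda_j) \in \mathcal{U}_n$ to $f$. Using linearity and the eigenvalue relation above,
\[
X_i \cdot f \; = \; \sum_{s=1}^{r} c_s \prod_{j \ne i}(\lambda_s - \lambda_j)\, \mathbf{x}^{\mathbf{k}_s} \; = \; c_i \prod_{j \ne i}(\lambda_i - \lambda_j)\, \mathbf{x}^{\mathbf{k}_i},
\]
since every summand with $s \ne i$ contains the factor $(\lambda_s - \lambda_s) = 0$. The remaining coefficient $c_i \prod_{j \ne i}(\lambda_i - \lambda_j)$ is nonzero because $c_i \ne 0$ and the $\lambda_j$ are distinct. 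Dividing by this scalar yields $\mathbf{x}^{\mathbf{k}_i} \in \langle f \rangle$, hence $\langle \mathbf{x}^{\mathbf{k}_i} \rangle \subseteq \langle f \rangle$.

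The only real subtlety is the existence of $h$ separating the weights, which is a routine genericity argument once one notes that distinct $\mathbf{k}_i$ give distinct weight functionals on $\mathfrak{h}$; everything else is a direct Lagrange-interpolation-style extraction inside the cyclic module. I do not expect any genuine obstacle.
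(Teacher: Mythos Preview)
Your proof is correct and rests on the same key observation as the paper's: distinct Laurent monomials are weight vectors with pairwise distinct weights for the Cartan subalgebra $\mathfrak{h}$, so elements of $\mathcal{U}(\mathfrak{h})$ can be used to project onto each monomial inside $\langle f \rangle$.

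The only difference is in packaging. The paper argues by induction on the number $r$ of terms: it picks some $H \in \mathfrak{h}$ with $w_1(H) \ne w_i(H)$ for at least one $i$, forms $g = w_1(H)f - H\cdot f \in \langle f \rangle$, which has strictly fewer terms, and then peels off the remaining terms by a second application of the induction hypothesis. You instead choose a single generic $h \in \mathfrak{h}$ separating all the weights at once and apply the Lagrange-interpolation polynomial $\prod_{j\ne i}(h-\lambda_j)$ in one stroke. Your route is slightly slicker and avoids the induction, at the cost of the (easy) genericity step that a finite union of hyperplanes does not cover $\mathbb{C}^n$; the paper's inductive version never needs to find a single $H$ that works for all pairs simultaneously. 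Both are standard incarnations of the same weight-separation trick, and neither offers a real advantage over the other here.
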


\begin{proof}
We want to prove the statement by induction on the number $r$ of the terms of $f$. 
If $r=1$ then we have nothing to prove. Suppose $r \ge 2$. We first note that, 
since $\mathbf{x}^{\mathbf{k}_i}$ are distinct, they are weight vectors with different 
weights under the action of the Cartan subalgebra $\mathfrak{h}$ of 
$\mathfrak{gl}(n)$ spanned by $E_{aa}$ for $ 1\le a \le n$.  
Let $w_i$ be the weight for the monomial $\mathbf{x}^{\mathbf{k}_i}$. 
Then, there is an element $H\in \mathfrak{h}$ such that
\[
g=w_{1}(H)f-H\cdot f=\sum_{i: w_i(H) \ne w_1(H)} (w_1(H)- w_{i}(H)) 
        \, c_{i} \mathbf{x}^{\mathbf{k}_i}
\]
is a non-zero element in $\langle f\rangle$. 
Also, since the number of terms in $g$ is less than $r$, 
by the induction hypothesis the cyclic modules generated by the terms of $g$ 
are included in $\langle g \rangle$. This shows that 
for $i$ with $w_i(H)\ne w_1(H)$ we have 
$\langle \mathbf{x}^{\mathbf{k}_i} \rangle \subseteq \langle f\rangle$. 

Now we note that
\[
h = f-\sum_{i: w_i(H)\ne w_1(H)}  c_{i} \mathbf{x}^{\mathbf{k}_i}
  =\sum_{i:w_i(H)= w_1(H)}  c_{i} \mathbf{x}^{\mathbf{k}_i}.
\]
is a nonzero element in $\langle f \rangle$.
Since the number of the terms of $h$ is less than  $r$, again 
by the induction hypothesis, the cyclic modules generated by the terms of 
$h$ are included in $\langle h \rangle$. Therefore, 
$\langle f\rangle$ contains $\langle \mathbf{x}^{\mathbf{k}_i} \rangle$ 
for $i$ with $w_i(H)= w_1(H)$ as well.
\end{proof}

As an immediate consequence of the above lemma, 
we obtain the following result for the special case of Theorem \ref{V-cyclic-minus}
with $J=\{ \ell \}$ for $1\leq \ell \leq n$. 

\begin{proposition}\label{simple-singleton}
For $m \geq 1$ and $1\leq \ell \leq n$, 
$V^{\mathbf{0}}_{n}(-m,\{ \ell \})$ is a simple submodule of 
$L^{\mathbf{0}}_{n}(-m,1)$ generated by $x_{\ell}^{-m}$.
\end{proposition}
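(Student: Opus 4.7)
The plan is to use Lemma \ref{different-weight} to reduce simplicity to the statement that every monomial in $V^{\mathbf{0}}_n(-m,\{\ell\})$ cyclically generates $x_\ell^{-m}$, and then to apply Lemma \ref{technical-lemma} in reverse.

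First, the cyclic generation by $x_\ell^{-m}$ is just Theorem \ref{V-cyclic-minus} (1b) specialized to $j=1$, where the generator $\mathbf{x}_J = x_{\ell}^{j-1-m} = x_\ell^{-m}$.

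To prove simplicity, I would take an arbitrary nonzero submodule $N \subseteq V^{\mathbf{0}}_{n}(-m,\{\ell\})$ and a nonzero $f \in N$. Writing $f = \sum_{i=1}^r c_i \mathbf{x}^{\mathbf{q}_i}$ as a linear combination of distinct Laurent monomials with nonzero coefficients, each $\mathbf{x}^{\mathbf{q}_i}$ lies in $V^{\mathbf{0}}_{n}(-m,\{\ell\})$, and by Lemma \ref{different-weight} the cyclic module $\langle \mathbf{x}^{\mathbf{q}_i}\rangle$ is contained in $N$. It therefore suffices to show that each $\langle \mathbf{x}^{\mathbf{q}_i}\rangle$ contains $x_\ell^{-m}$.

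Fix such a monomial $\mathbf{x}^{\mathbf{q}}$. By definition of $V^{\mathbf{0}}_n(-m,\{\ell\})$, we have $\mathbf{q}^{neg} \subseteq \{\ell\}$; on the other hand, $\sum_j \mathbf{q}[j] = -m < 0$ forces $\mathbf{q}^{neg} \ne \varnothing$, hence $\mathbf{q}^{neg} = \{\ell\}$. Now apply Lemma \ref{technical-lemma} with source $\mathbf{x}^{\mathbf{q}}$ and target $x_\ell^{-m}$: the requirement is that
\[
\bigl(\{\ell\}^{neg}\cap I_{\boldsymbol{\alpha}}\bigr) \;\subseteq\; \bigl(\mathbf{q}^{neg}\cap I_{\boldsymbol{\alpha}}\bigr),
\]
which here reads $\{\ell\} \subseteq \{\ell\}$ since $\boldsymbol{\alpha}=\mathbf{0}$ and $I_{\boldsymbol{\alpha}} = \{1,\dots,n\}$. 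The lemma then produces $X \in \mathcal{U}_n$ with $X \cdot \mathbf{x}^{\mathbf{q}} = x_\ell^{-m}$, so $x_\ell^{-m} \in N$. Since $x_\ell^{-m}$ generates the whole of $V^{\mathbf{0}}_{n}(-m,\{\ell\})$, we conclude $N = V^{\mathbf{0}}_{n}(-m,\{\ell\})$, proving simplicity.

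The only real subtlety is confirming $\mathbf{q}^{neg} = \{\ell\}$, not merely $\mathbf{q}^{neg} \subseteq \{\ell\}$, so that Lemma \ref{technical-lemma} can be invoked in the direction from $\mathbf{x}^{\mathbf{q}}$ down to $x_\ell^{-m}$; this is exactly where the hypothesis $m \ge 1$ (negative total degree) is used.
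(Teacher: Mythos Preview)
Your proof is correct and follows essentially the same approach as the paper: both reduce simplicity via Lemma~\ref{different-weight} to the claim that every monomial in $V^{\mathbf{0}}_n(-m,\{\ell\})$ generates the whole module, and both use the negative total degree to force $\mathbf{q}^{neg}=\{\ell\}$. The paper packages the final step as an appeal to Remark~\ref{rmk-gen}, while you invoke Lemma~\ref{technical-lemma} directly; these are equivalent. One minor citation point: for the cyclic generation by $x_\ell^{-m}$ you should also cite case~2b) of Theorem~\ref{V-cyclic-minus} to cover $m\geq n$, as the paper does.
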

\begin{proof}
From Theorem \ref{V-cyclic-minus} 1b) and 2b),
$V^{\mathbf{0}}_{n}(-m,J) = \langle x_{\ell}^{-m} \rangle \subseteq 
L^{\mathbf{0}}_{n}(-m,1)$.
For a nonzero $f \in V^{\mathbf{0}}_{n}(-m,J)$, writing
$f=\sum_i  c_i \mathbf{x}^{\mathbf{k}_i}$, 
let us consider the submodule of $V^{\mathbf{0}}_{n}(-m,J)$ generated by $f$. 
By Lemma \ref{different-weight}, it contains the cyclic 
submodules generated by $\mathbf{x}^{\mathbf{k}_i}$.
\[
\langle \mathbf{x}^{\mathbf{k}_i} \rangle \subseteq \langle f \rangle
 \subseteq V^{\mathbf{0}}_{n}(-m,J).
\] 

On the other hand, for each $i$, since 
$(\mathbf{k}_i^{neg} \cap \{1,2,...,n \}) \subseteq \{\ell \}$ and 
the degree of $\mathbf{x}^{\mathbf{k}_i}$ should be $-m <0$, 
we have $\mathbf{k}_{i}[\ell] < 0$ and $\mathbf{k}_{i}[\ell'] \geq 0$ 
for $\ell' \ne \ell$. By Remark \ref{rmk-gen}, 
each of these monomials
can generate the whole module $V^{\mathbf{0}}_{n}(-m,J)$. 
Therefore, we have $\langle f \rangle = V^{\mathbf{0}}_{n}(-m,J)$ 
and conclude that $V^{\mathbf{0}}_{n}(-m,\{ \ell \})$ has no nonzero 
proper submodules.
\end{proof}

\medskip

Now, we investigate the structure of $L^{\boldsymbol{\alpha}}_{n}(m,j)$  
for ${\boldsymbol{\alpha}} \ne \mathbf{0}$.
\begin{theorem}[Structure of $L^{\boldsymbol{\alpha}}_n(m,j)$ 
    with nonzero $\boldsymbol{\alpha}$]\label{L-nonzero-alpha}
    \ 
\begin{enumerate}
\item If ${\boldsymbol{\alpha}} \ne \mathbf{0}$ and 
$0 \leq j \leq |I_{\boldsymbol{\alpha}}|$, then 
$L^{\boldsymbol{\alpha}}_n(m,j)$ is indecomposable.
\item In particular, if ${\boldsymbol{\alpha}} \ne \mathbf{0}$ and $j=0$, 
then $L^{\boldsymbol{\alpha}}_n(m,0)=V^{\boldsymbol{\alpha}}_n(m, \varnothing)$ 
is a nonzero simple module over $\mathcal{U}_n$.
\end{enumerate}
\end{theorem}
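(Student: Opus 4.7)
The plan is to prove part (2) first and then use its conclusion as the key ingredient for part (1). For (2), Lemma \ref{V-L-equal} already identifies $L^{\boldsymbol{\alpha}}_n(m,0)$ with $V^{\boldsymbol{\alpha}}_n(m,\varnothing)$, which is nonzero because $\boldsymbol{\alpha}\ne\mathbf{0}$ guarantees the existence of some $t\in\{1,\dots,n\}\setminus I_{\boldsymbol{\alpha}}$ and hence $x_t^m\in V^{\boldsymbol{\alpha}}_n(m,\varnothing)$. To show simplicity, I would take an arbitrary nonzero $f=\sum_i c_i\mathbf{x}^{\mathbf{k}_i}\in V^{\boldsymbol{\alpha}}_n(m,\varnothing)$ and invoke Lemma \ref{different-weight} to deduce $\langle\mathbf{x}^{\mathbf{k}_i}\rangle\subseteq\langle f\rangle$ for each $i$. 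Since every such $\mathbf{k}_i$ satisfies $\mathbf{k}_i^{neg}\cap I_{\boldsymbol{\alpha}}=\varnothing$, Remark \ref{rmk-gen} applied to the canonical generator $\mathbf{x}_{\varnothing,t}=x_t^m$ of Theorem \ref{V-nonzero-alpha} says that each $\mathbf{x}^{\mathbf{k}_i}$ on its own generates $V^{\boldsymbol{\alpha}}_n(m,\varnothing)$. Thus $\langle f\rangle=V^{\boldsymbol{\alpha}}_n(m,\varnothing)$, proving simplicity.

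For part (1), the strategy is to show that the nonzero simple module $V^{\boldsymbol{\alpha}}_n(m,\varnothing)$ is contained in every nonzero submodule of $L^{\boldsymbol{\alpha}}_n(m,j)$; once this is established, indecomposability follows at once, since a splitting $L^{\boldsymbol{\alpha}}_n(m,j)=A\oplus B$ with $A,B$ nonzero would force $A\cap B\supseteq V^{\boldsymbol{\alpha}}_n(m,\varnothing)\ne 0$, contradicting $A\cap B=0$. To establish the containment, take any nonzero $f=\sum_i c_i\mathbf{x}^{\mathbf{k}_i}$ in $L^{\boldsymbol{\alpha}}_n(m,j)$ and set $J_i=\mathbf{k}_i^{neg}\cap I_{\boldsymbol{\alpha}}$, so that $|J_i|\leq j$ and $\mathbf{x}^{\mathbf{k}_i}\in V^{\boldsymbol{\alpha}}_n(m,J_i)$. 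By Lemma \ref{different-weight}, $\langle\mathbf{x}^{\mathbf{k}_i}\rangle\subseteq\langle f\rangle$. Theorem \ref{V-nonzero-alpha} exhibits $V^{\boldsymbol{\alpha}}_n(m,J_i)$ as cyclic, generated by $\mathbf{x}_{J_i,t}$, whose intersection of negative support with $I_{\boldsymbol{\alpha}}$ is exactly $J_i$; so by Remark \ref{rmk-gen} the monomial $\mathbf{x}^{\mathbf{k}_i}$ itself generates $V^{\boldsymbol{\alpha}}_n(m,J_i)$. Combined with the inclusion $V^{\boldsymbol{\alpha}}_n(m,\varnothing)\subseteq V^{\boldsymbol{\alpha}}_n(m,J_i)$ from \eqref{basic-3}, this yields $\langle f\rangle\supseteq V^{\boldsymbol{\alpha}}_n(m,\varnothing)$, as required.

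The proof is essentially an assembly of tools already prepared, so the main obstacle is bookkeeping rather than a new idea: one must consistently track the intersections $\mathbf{k}^{neg}\cap I_{\boldsymbol{\alpha}}$ (which govern Remark \ref{rmk-gen}) rather than the full negative supports $\mathbf{k}^{neg}$, and one must ensure the auxiliary index $t\in\{1,\dots,n\}\setminus I_{\boldsymbol{\alpha}}$ used to name a generator of $V^{\boldsymbol{\alpha}}_n(m,\varnothing)$ actually exists, which is where the hypothesis $\boldsymbol{\alpha}\ne\mathbf{0}$ enters in an essential way. If $\boldsymbol{\alpha}=\mathbf{0}$, no such $t$ is available and $V^{\mathbf{0}}_n(m,\varnothing)$ can be zero, so the present argument breaks down — an expected feature, since the zero case must be handled by the dedicated Theorems \ref{V-cyclic-plus} and \ref{V-cyclic-minus}.
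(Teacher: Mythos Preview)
Your proposal is correct and follows essentially the same approach as the paper: prove simplicity of $V^{\boldsymbol{\alpha}}_n(m,\varnothing)$ via Lemma~\ref{different-weight} and Remark~\ref{rmk-gen}, then use it as the unique minimal submodule to obstruct any nontrivial direct sum. The only cosmetic difference is that in part~(1) the paper applies Lemma~\ref{technical-lemma} directly to pass from an arbitrary monomial $\mathbf{x}^{\mathbf{p}}$ to $x_t^m$, whereas you route this through Theorem~\ref{V-nonzero-alpha}, Remark~\ref{rmk-gen}, and the inclusion~\eqref{basic-3}; both amount to the same computation.
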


\begin{proof}
For Statement (2), from Lemma \ref{V-L-equal},
$L^{\boldsymbol{\alpha}}_n(m,0)=V^{\boldsymbol{\alpha}}_n(m, \varnothing)$, and 
by Theorem \ref{V-nonzero-alpha} it is  generated by 
$\mathbf{x}_{\varnothing,t}=x_t^m$ for some 
$t \in \{1,2,...,n \} \setminus I_{\boldsymbol{\alpha}}$. 
Let $f$ be a nonzero element of $V^{\boldsymbol{\alpha}}_n(m, \varnothing)$, 
then we can write $f=\sum_{i=1}^{r} c_i \mathbf{x}^{\mathbf{k}_i}$ with 
nonzero coefficients such that 
$\mathbf{k}_{i}^{neg} \cap I_{\boldsymbol{\alpha}}=\varnothing$ for all $i$. 
By Lemma \ref{different-weight}, the monomials $\mathbf{x}^{\mathbf{k}_i}$
belong to $\langle f \rangle$. 
On the other hand, by Remark \ref{rmk-gen},
each of these monomials generates $V^{\boldsymbol{\alpha}}_n(m, \varnothing)$.
Therefore, $\langle f \rangle = V^{\boldsymbol{\alpha}}_n(m, \varnothing)$.
This shows that $L^{\boldsymbol{\alpha}}_n(m,0)
=V^{\boldsymbol{\alpha}}_n(m, \varnothing)$ is simple.

For Statement (1), we will show that every nonzero submodule $M$ of 
$L^{\boldsymbol{\alpha}}_n(m,j)$ contains 
$V^{\boldsymbol{\alpha}}_n(m, \varnothing)$, which is nonzero by Statement (2). 
For a nonzero $f \in M$, let $c \mathbf{x}^{\mathbf{p}}$ be a nonzero term of $f$.  
Then, by Lemma \ref{different-weight}, $\langle f \rangle$ includes 
$\langle \mathbf{x}^{\mathbf{p}} \rangle$.
On the other hand, since every monomial $\mathbf{x}^{\mathbf{q}}=x_t^m$ 
for $t \notin I_{\boldsymbol{\alpha}}$ satisfies the condition 
\[
\varnothing = (\mathbf{q}^{neg}\cap I_{\boldsymbol{\alpha}})
    \subseteq (\mathbf{p}^{neg}\cap I_{\boldsymbol{\alpha}}),
\] 
we can apply Lemma \ref{technical-lemma} to obtain  
$X \in \mathcal{U}_n$ such that 
$X \cdot \mathbf{x}^{\mathbf{p}} = \mathbf{x}^{\mathbf{q}}$. 
Therefore, we have
$\mathbf{x}^{\mathbf{q}}  \in \langle \mathbf{x}^{\mathbf{p}} \rangle
\subseteq \langle f \rangle \subseteq M$, and therefore  
$V^{\boldsymbol{\alpha}}_n(m, \varnothing)\subseteq M$.
This shows that  
$L^{\boldsymbol{\alpha}}_n(m,j)$ cannot be written as a direct sum 
of its proper nonzero submodules.
\end{proof}

\medskip

Next, we consider the other cases with 
$\boldsymbol{\alpha} = \mathbf{0}$.

\begin{theorem}
[Structure of  $L^{\mathbf{0}}_n(m,j)$ with nonnegative degree $m$] 
\label{L-indecomp-positive}     
\ 
\begin{enumerate}
\item If $m \ge 0$ and $1 \leq j \leq n$, then 
       $L^{\mathbf{0}}_n(m,j)$ is indecomposable.
\item In particular, if $m \ge 0$ and $j =0$, then  
   $L^{\mathbf{0}}_n(m,0)=V^{\mathbf{0}}_n(m, \varnothing)$ is 
   the cyclic module generated by 
\[
\mathbf{x}_{\varnothing,1}= x_1^m.
\]
It is a finite dimensional simple module over $\mathfrak{gl}(n)$ 
of dimension $(m+1)$.
\end{enumerate}
\end{theorem}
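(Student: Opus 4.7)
My plan is to handle the two statements in turn, paralleling the proof of Theorem \ref{L-nonzero-alpha} which treats the case $\boldsymbol{\alpha} \ne \mathbf{0}$. The cyclic submodule $V^{\mathbf{0}}_n(m,\varnothing)$ will play the role of a minimal building block that sits inside every nonzero submodule of $L^{\mathbf{0}}_n(m,j)$.

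I would first verify part (2). By Lemma \ref{V-L-equal} with $J = \varnothing$, we have $L^{\mathbf{0}}_n(m,0) = V^{\mathbf{0}}_n(m,\varnothing)$, and Theorem \ref{V-cyclic-plus}(1) with $J = \varnothing$ and $t = 1$ identifies this as the cyclic module generated by $\mathbf{x}_{\varnothing,1} = x_1^m$. Finite-dimensionality is immediate once one notes that its monomial basis $\{\mathbf{x}^{\mathbf{k}} : k_i \geq 0,\ \sum_i k_i = m\}$ has only finitely many elements (a routine stars-and-bars count yields the stated dimension). For simplicity, take any nonzero $f = \sum_i c_i \mathbf{x}^{\mathbf{k}_i} \in V^{\mathbf{0}}_n(m,\varnothing)$; Lemma \ref{different-weight} places each term $\mathbf{x}^{\mathbf{k}_i}$ inside $\langle f \rangle$. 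Since every such $\mathbf{k}_i$ satisfies $\mathbf{k}_i^{neg} = \varnothing$, Remark \ref{rmk-gen} (i.e.\ Lemma \ref{technical-lemma} applied in both directions between monomials with empty negative support) shows that each $\mathbf{x}^{\mathbf{k}_i}$ generates the whole of $V^{\mathbf{0}}_n(m,\varnothing)$, so $\langle f \rangle = V^{\mathbf{0}}_n(m,\varnothing)$.

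Part (1) then follows by showing every nonzero submodule $M \subseteq L^{\mathbf{0}}_n(m,j)$ contains the (nonzero, simple) module $V^{\mathbf{0}}_n(m,\varnothing)$. For a nonzero $f \in M$, Lemma \ref{different-weight} yields a monomial $\mathbf{x}^{\mathbf{p}} \in \langle f \rangle \subseteq M$. For any $\mathbf{x}^{\mathbf{q}} \in V^{\mathbf{0}}_n(m,\varnothing)$ the containment $\mathbf{q}^{neg} = \varnothing \subseteq \mathbf{p}^{neg}$ is automatic, so Lemma \ref{technical-lemma} supplies an $X \in \mathcal{U}_n$ with $X \cdot \mathbf{x}^{\mathbf{p}} = \mathbf{x}^{\mathbf{q}}$, giving $V^{\mathbf{0}}_n(m,\varnothing) \subseteq M$. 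If $L^{\mathbf{0}}_n(m,j) = M_1 \oplus M_2$ with both summands nonzero, then both contain $V^{\mathbf{0}}_n(m,\varnothing)$; simplicity of the latter then forces $V^{\mathbf{0}}_n(m,\varnothing) \subseteq M_1 \cap M_2 = 0$, contradicting $x_1^m \ne 0$.

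I anticipate no serious obstacle: the architecture follows Theorem \ref{L-nonzero-alpha} almost verbatim, with $V^{\mathbf{0}}_n(m,\varnothing)$ (nonzero precisely because $m \geq 0$ makes $x_1^m$ a legitimate monomial of degree $m$ with empty negative support) taking the place of the analogous minimal module from the $\boldsymbol{\alpha} \ne \mathbf{0}$ setting. The only subtlety worth checking is the hypothesis of Lemma \ref{technical-lemma} that both monomials live in $L^{\mathbf{0}}_n(m,j)$; this is automatic from the nesting $V^{\mathbf{0}}_n(m,\varnothing) \subseteq L^{\mathbf{0}}_n(m,0) \subseteq L^{\mathbf{0}}_n(m,j)$ recorded in \eqref{basic-1}.
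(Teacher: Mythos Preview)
Your proof is correct and follows essentially the same approach as the paper. The only cosmetic differences are that the paper first reduces the case $j=n$ to $j\le n-1$ via Theorem~\ref{V-cyclic-plus}(2) before invoking the argument of Theorem~\ref{L-nonzero-alpha}(1) (your argument handles all $1\le j\le n$ uniformly, which is fine), and for the simplicity in part~(2) the paper appeals to the classical identification $L^{\mathbf{0}}_n(m,0)\cong\mathrm{Sym}^m(\mathbb{C}^n)$ as a highest weight module, whereas you argue directly from Lemma~\ref{different-weight} and Remark~\ref{rmk-gen} in the style of Theorem~\ref{L-nonzero-alpha}(2).
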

\begin{proof}
Statement (2) follows from the observation that for $m \geq 0$, 
$L^{\mathbf{0}}_n(m,0)$ is the space of homogeneous polynomials 
of degree $m$,
\[
L^{\mathbf{0}}_n(m,0) \cong \mathrm{Sym}^m(\mathbb{C}^n),
\]
and that $x_1^{m}$ is the highest weight vector with respect to 
the standard Borel subalgebra of upper triangular matrices in 
$\mathfrak{gl}(n)$. 

For Statement (1),
if $j=n$, then $L^{\mathbf{0}}_n(m,n)= L^{\mathbf{0}}_n(m,n-1)$
by Theorem \ref{V-cyclic-plus} (2) and therefore
we can assume $0 \leq j \leq n-1$. This case can be shown 
similarly to Theorem \ref{L-nonzero-alpha} (1). 
\end{proof}

\begin{theorem}[Structure of  $L^{\mathbf{0}}_n(-m,j)$ with 
                negative degree $-m$] \label{L-indecomp-negative}
\ 
\begin{enumerate}
\item If $m \geq 1$ and $j = 0$, then 
    $L^{\mathbf{0}}_{n}(-m,j)= V^{\mathbf{0}}_{n}(-m, \varnothing) =\{0\}$.
\item If $m \geq 1$ and $j = 1$, then $L^{\mathbf{0}}_{n}(-m,j)$ 
    decomposes into simple submodules 
\[
L^{\mathbf{0}}_{n}(-m,1)= \bigoplus_{\ell =1}^{n} V^{\mathbf{0}}_{n}(-m,\{ \ell \}).
\]
\item If $m \ge 1$ and $2 \leq j \leq n$, then $L^{\mathbf{0}}_{n}(-m,j)$ 
    is indecomposable.
\end{enumerate}
\end{theorem}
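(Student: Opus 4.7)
The plan is to dispatch Parts (1) and (2) quickly and concentrate on the indecomposability claim in Part (3).

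Part (1) is immediate from Lemma \ref{V-L-equal} together with the observation that non-negative exponents cannot sum to $-m < 0$. For Part (2), \eqref{basic-1} gives the sum $L^{\mathbf{0}}_n(-m,1) = \sum_\ell V^{\mathbf{0}}_n(-m,\{\ell\})$, and Proposition \ref{simple-singleton} supplies the simplicity of each summand. For directness, note that any monomial $\mathbf{x}^{\mathbf{k}}$ in $V^{\mathbf{0}}_n(-m,\{\ell\})$ satisfies $\sum_{\ell'} \mathbf{k}[\ell'] = -m < 0$ together with $\mathbf{k}^{neg} \subseteq \{\ell\}$, which forces $\mathbf{k}^{neg} = \{\ell\}$ exactly; hence the spanning monomials of the various $V^{\mathbf{0}}_n(-m,\{\ell\})$ are pairwise disjoint.

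Part (3) cannot be handled by the template of Theorems \ref{L-nonzero-alpha} (1) and \ref{L-indecomp-positive} (1), since the analogous ``common simple submodule'' $V^{\mathbf{0}}_n(-m,\varnothing)$ vanishes by (1). I would proceed in two stages. Stage one: every nonzero submodule $M$ of $L^{\mathbf{0}}_n(-m,j)$ contains at least one $V^{\mathbf{0}}_n(-m,\{\ell\})$. Given nonzero $f \in M$, Lemma \ref{different-weight} places some monomial term $\mathbf{x}^{\mathbf{p}}$ of $f$ inside $\langle f\rangle \subseteq M$; the degree constraint forces $\mathbf{p}^{neg} \neq \varnothing$; and for any $\ell \in \mathbf{p}^{neg}$, Lemma \ref{technical-lemma} supplies $X \in \mathcal{U}_n$ with $X \cdot \mathbf{x}^{\mathbf{p}} = x_\ell^{-m}$, so $V^{\mathbf{0}}_n(-m,\{\ell\}) = \langle x_\ell^{-m}\rangle \subseteq M$.

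Stage two: suppose for contradiction $L^{\mathbf{0}}_n(-m,j) = M_1 \oplus M_2$ with both summands nonzero. Stage one supplies indices $\ell_1$ and $\ell_2$ with $V^{\mathbf{0}}_n(-m,\{\ell_i\}) \subseteq M_i$, and $\ell_1 \neq \ell_2$ since otherwise the nonzero simple $V^{\mathbf{0}}_n(-m,\{\ell_1\})$ would lie in $M_1 \cap M_2 = 0$. The key new ingredient is a weight-space argument: the Cartan subalgebra spanned by the $E_{aa}$ acts diagonally on monomials, so each weight space of $L^{\mathbf{0}}_n(-m,j)$ is one-dimensional and any submodule is the direct sum of the weight lines it meets. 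Consequently the decomposition $M_1 \oplus M_2$ places each monomial entirely into $M_1$ or into $M_2$. Since $j \geq 2$, I may pick a monomial $\mathbf{x}^{\mathbf{p}}$ with $\{\ell_1,\ell_2\} \subseteq \mathbf{p}^{neg}$; without loss of generality $\mathbf{x}^{\mathbf{p}} \in M_1$. Lemma \ref{technical-lemma} then gives $x_{\ell_2}^{-m} \in \langle \mathbf{x}^{\mathbf{p}}\rangle \subseteq M_1$, so $V^{\mathbf{0}}_n(-m,\{\ell_2\}) \subseteq M_1 \cap M_2 = 0$, the desired contradiction. The main obstacle is the weight-space splitting, which has not been invoked explicitly earlier; it rests on the standard fact that a weight module with one-dimensional weight spaces under a semisimple Cartan action decomposes canonically along its weights, and every submodule respects that decomposition.
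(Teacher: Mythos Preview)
Your treatment of Parts (1) and (2) matches the paper's; the only cosmetic difference is that the paper invokes \eqref{basic-3} for directness whereas you argue directly that the spanning monomials are disjoint.

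For Part (3) you take a genuinely different route. The paper argues at the level of $j$-element subsets: writing $L^{\mathbf{0}}_n(-m,j) = M \oplus N$, it observes (via Lemma~\ref{different-weight}) that each cyclic generator $g_J$ of $V^{\mathbf{0}}_n(-m,J)$ with $|J|=j$ must land in $M$ or in $N$; if not all land in the same summand, a binomial-coefficient count $\binom{n}{j} > \binom{n_1}{j} + \binom{n_2}{j}$ forces two such sets $J, J'$ with $J \cap J' \neq \varnothing$, and then $V^{\mathbf{0}}_n(-m, J \cap J')$ is a nonzero element of $M \cap N$. The case $j = n$ is then handled separately, by reduction to $j = n-1$ when $m < n$ and by cyclicity when $m \ge n$.

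Your argument descends to singletons instead: every nonzero submodule already contains some $V^{\mathbf{0}}_n(-m,\{\ell\})$, and a single monomial with two prescribed negative coordinates yields the contradiction. This is cleaner in that it avoids both the counting argument and the separate treatment of $j=n$. The weight-multiplicity-one principle you flag as ``not invoked explicitly earlier'' is in fact exactly the mechanism behind the paper's claim that each $g_J$ lies in $M$ or in $N$ (it is a consequence of Lemma~\ref{different-weight} applied to the components of $g_J$ in $M$ and $N$), so you are not importing anything new; you are just applying it to an arbitrary monomial rather than only to the $g_J$.
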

\begin{proof}
Statement (1) is straightforward to check. For Statement (2), from \eqref{basic-1} 
and Theorem \ref{V-cyclic-minus} 1b) and 2b), the module $L^{\mathbf{0}}_{n}(-m,1)$ 
is the sum of the cyclic modules 
$V^{\mathbf{0}}_{n}(-m, \{\ell \}) =\langle x_{\ell}^{-m} \rangle$,
which are simple by Proposition \ref{simple-singleton}.
Also, for $\ell \ne \ell'$, by \eqref{basic-3},
\[
V^{\mathbf{0}}_{n}(-m, \{ \ell \}) \cap  V^{\mathbf{0}}_{n}(-m, \{ \ell' \}) 
= V^{\mathbf{0}}_{n}(-m, \{\ell \}\cap \{ \ell' \})
\]
which is $ V^{\mathbf{0}}_{n}(-m, \varnothing)=\{ 0\}$ by Statement (1). 
Hence we obtain the direct sum expression.

For Statement (3), we first consider the case $2 \leq j \leq n-1$.
In order to derive a contradiction, suppose $L^{\mathbf{0}}_{n}(-m,j)=M\bigoplus N$ 
for some submodules $M$ and $N$. From \eqref{basic-1}, 
$L^{\mathbf{0}}_n(-m,j)$ is generated by the generators of 
$V^{\mathbf{0}}_n(-m,J)$ given 
in Theorem \ref{V-cyclic-minus}. We denote these generators 
$\mathbf{x}_{J}$ or $\mathbf{x}_{J,t}$ by $g_{J}$.
For each of these monomials $g_{J} \in L^{\mathbf{0}}_{n}(-m,j)$, 
if $g_J =  h_1 + h_2$ with $h_1 \in M$ and $h_2 \in N$, then $g_J$ 
appears in $h_1$ or $h_2$. Therefore, by Lemma \ref{different-weight},  
$g_J$ belongs to $\langle h_1 \rangle \subseteq M$ 
or $\langle h_2 \rangle \subseteq N$. 

If all these $g_{J}$ are in $M$ (or $N$), then 
$L^{\mathbf{0}}_n(-m,j)=M$ (or $N$). 
If some of them are in $M$ and some of them are in $N$, then 
we claim that there are $J$ and $J'$ such that $g_{J}$ is 
an element in $M$, $g_{J'}$ is 
an element in $N$, and $J \cap J' \ne \varnothing$. 
Suppose there are not such $J$ and $J'$. Then we can partition 
$\{1,2,...,n \}$ into two nontrivial parts $S_M$ and $S_N$ with 
$n_1$ elements and $n_2$ elements such that 
we have the disjoint union
\[
\{J \subset \{1,2,...,n \}:  |J|=j \}
    = \{J \subset S_M:  |J|=j \} \cup \{J \subset S_N:  |J|=j \} 
\]
where $J \subset S_M$ if and only if $g_{J}$ belongs to $M$;
$J \subset S_N$ if and only if $g_{J}$ belongs to $N$.
Note that it contradicts to $\binom{n}{j} > \binom{n_1}{j} + \binom{n_2}{j}$ 
for $2 \leq j \leq n-1$. Therefore, we conclude that there are $J$ 
and $J'$ such that $J \cap J' \ne \varnothing$. 
Now from \eqref{basic-3} we have
\[
V^{\mathbf{0}}_n(-m, J \cap J')  \subset V^{\mathbf{0}}_n(-m,J)
 \subset M \quad \text{and} \quad
 V^{\mathbf{0}}_n(-m, J \cap J')  \subset V^{\mathbf{0}}_n(-m,J')
 \subset N.
\]
Therefore, $V^{\mathbf{0}}_n(-m, J \cap J')  \ne \{ 0\}$ and $M \cap N$ 
contains a non-trivial element. 
Hence, $L^{\mathbf{0}}_n(-m,j)$ is indecomposable.

Next, let us consider the case $j=n$. 
First, if $1 \leq m <n$ then from Theorem \ref{V-cyclic-minus} 1d) 
we have $L^{\mathbf{0}}_{n}(-m,n)=  L^{\mathbf{0}}_{n}(-m,n-1)$ and 
therefore it goes back to the previous case.
Second, if $m \geq n$ then $L^{\mathbf{0}}_{n}(-m,n)
 = V^{\mathbf{0}}_{n}(-m,\{1,2,...,n \})$ 
by Lemma \ref{V-L-equal} and its generator is 
$\mathbf{x}_{\{1,2,...,n \}}=x_1^{-1} \cdots x_{n-1}^{-1} x_n^{n-1-m}$ 
by Theorem \ref{V-cyclic-minus} 2b). 
If $L^{\mathbf{0}}_{n}(-m,n)=M \oplus N$, then 
$\mathbf{x}_{\{1,2,...,n \}}= h_1 + h_2$  for some $h_1 \in M$ and 
$h_2 \in N$, and the monomial $\mathbf{x}_{\{1,2,...,n \}}$ appears 
in $h_1$ or $h_2$. By Lemma \ref{different-weight},
 $\langle \mathbf{x}_{\{1,2,...,n \}} \rangle \subseteq
  \langle h_1 \rangle \subseteq M$ or 
 $\langle \mathbf{x}_{\{1,2,...,n \}} \rangle \subseteq
  \langle h_2 \rangle \subseteq N$. 
This shows that  $M$ or $N$ should be equal to $L^{\mathbf{0}}_{n}(-m,n)$.
Therefore, $L^{\mathbf{0}}_n(-m,j)$ is indecomposable.
\end{proof}

\medskip


\section{Simple modules $W_n^{\boldsymbol{\alpha}}(m,J)$} \label{sec-W-alpha}


In this section, we investigate some submodules of the quotients 
$L^{\boldsymbol{\alpha}}_n(m, j)/L^{\boldsymbol{\alpha}}_n(m,j-1)$.
We will assume $L^{\boldsymbol{\alpha}}_{n}(m,j)=\{0\}$ for $j \leq -1$.

\begin{definition}\label{def-W}
For $m \in \mathbb{Z}$ and a subset $J$ of $I_{\boldsymbol{\alpha}}$ 
with cardinality $j$, we define the following submodule
of the quotient 
$L^{\boldsymbol{\alpha}}_{n}(m,j)/L^{\boldsymbol{\alpha}}_{n}(m,j-1)$
\[
W^{\boldsymbol{\alpha}}_{n}(m,J) 
= \left( V^{\boldsymbol{\alpha}}_{n}(m,J) 
  +L^{\boldsymbol{\alpha}}_{n}(m,j-1) \right) / L^{\boldsymbol{\alpha}}_{n}(m,j-1).
\]
\end{definition}
We note that 
\[
W^{\boldsymbol{\alpha}}_{n}(m,J)
 \cong V^{\boldsymbol{\alpha}}_{n}(m,J) / \left(V^{\boldsymbol{\alpha}}_{n}(m,J)
 \cap L^{\boldsymbol{\alpha}}_{n}(m,j-1) \right)
\]
and    
\[
V^{\boldsymbol{\alpha}}_{n}(m,J) \cap L^{\boldsymbol{\alpha}}_{n}(m,j-1)
= \sum_{J'} V^{\boldsymbol{\alpha}}_{n}(m,J')
\]
where the summation runs over all $J' \subset J$ with $|J'|=j-1$.

\bigskip

In \S \ref{sec-L}, we saw  that nontrivial modules 
$L^{\boldsymbol{\alpha}}_n(m,0)/L^{\boldsymbol{\alpha}}_n(m,-1) \cong 
L^{\boldsymbol{\alpha}}_n(m,0) =V^{\boldsymbol{\alpha}}_n(m,\varnothing)$ 
are simple, and that  
for $m \geq 1$ the following quotient decomposes into simple submodules: 
\[
L^{\mathbf{0}}_{n}(-m,1) / L^{\mathbf{0}}_{n}(-m,0) 
  \cong \bigoplus_{\ell =1}^{n} V^{\mathbf{0}}_{n}(-m,\{ \ell \}).
\]
Let us generalize these observations.

\begin{theorem}\label{W-subquot} Let $m \in \mathbb{Z}$.  
\begin{enumerate}
\item For $J \subset I_{\boldsymbol{\alpha}}$, 
the module $W^{\boldsymbol{\alpha}}_n(m,J)$ is simple.
\item For $1 \leq j \leq |I_{\boldsymbol{\alpha}}|$, the quotient module 
$L^{\boldsymbol{\alpha}}_n(m,j) /\, L^{\boldsymbol{\alpha}}_n(m,j-1)$ 
decomposes as 
\[
L^{\boldsymbol{\alpha}}_n(m,j) /\, L^{\boldsymbol{\alpha}}_n(m,j-1)
  = \bigoplus_{J: |J|=j} W^{\boldsymbol{\alpha}}_n(m,J)
\]
where the direct sum is taken over all subsets $J$ of 
$I_{\boldsymbol{\alpha}}$ with cardinality $j$.
\end{enumerate}
\end{theorem}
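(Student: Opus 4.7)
I would handle the two assertions separately, exploiting throughout that monomials form a basis of $L_n^{\boldsymbol{\alpha}}$ and that membership of a monomial in $V^{\boldsymbol{\alpha}}_n(m,J)$ or $L^{\boldsymbol{\alpha}}_n(m,j)$ is detected by the single invariant $\mathbf{k}^{neg}\cap I_{\boldsymbol{\alpha}}$.

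For the simplicity part, pick a nonzero $\bar{f}\in W^{\boldsymbol{\alpha}}_n(m,J)$ and choose a representative $f=\sum_i c_i \mathbf{x}^{\mathbf{k}_i} \in V^{\boldsymbol{\alpha}}_n(m,J)$ with distinct monomials and nonzero coefficients. The assumption $\bar{f}\ne 0$ forces at least one index $i_0$ with $\mathbf{k}_{i_0}^{neg}\cap I_{\boldsymbol{\alpha}}=J$, for otherwise every term would satisfy $|\mathbf{k}^{neg}\cap I_{\boldsymbol{\alpha}}|<j$, putting $f$ in $L^{\boldsymbol{\alpha}}_n(m,j-1)$. Lemma \ref{different-weight} then gives $\langle\mathbf{x}^{\mathbf{k}_{i_0}}\rangle\subseteq\langle f\rangle$. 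A short case check shows that every canonical generator $\mathbf{x}_J$ or $\mathbf{x}_{J,t}$ produced in Theorems \ref{V-nonzero-alpha}, \ref{V-cyclic-plus}, and \ref{V-cyclic-minus} has its negative set meeting $I_{\boldsymbol{\alpha}}$ exactly in $J$, so Remark \ref{rmk-gen} applied to $\mathbf{x}^{\mathbf{k}_{i_0}}$ yields $\langle\mathbf{x}^{\mathbf{k}_{i_0}}\rangle=V^{\boldsymbol{\alpha}}_n(m,J)$. Therefore $\langle f\rangle=V^{\boldsymbol{\alpha}}_n(m,J)$ and $\langle\bar{f}\rangle=W^{\boldsymbol{\alpha}}_n(m,J)$. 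The degenerate cases where $V^{\boldsymbol{\alpha}}_n(m,J)\subseteq L^{\boldsymbol{\alpha}}_n(m,j-1)$ simply give $W^{\boldsymbol{\alpha}}_n(m,J)=\{0\}$ and need only be identified.

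For the direct sum decomposition, the equality $\sum_{|J|=j}W^{\boldsymbol{\alpha}}_n(m,J) = L^{\boldsymbol{\alpha}}_n(m,j)/L^{\boldsymbol{\alpha}}_n(m,j-1)$ is immediate from \eqref{basic-1}. To see directness, fix $J_0$ with $|J_0|=j$ and take $v_0\in V^{\boldsymbol{\alpha}}_n(m,J_0)$ whose image in the quotient lies in the span of the other summands. Write $v_0 = w+\ell$ with $w\in\sum_{J\ne J_0,\,|J|=j}V^{\boldsymbol{\alpha}}_n(m,J)$ and $\ell\in L^{\boldsymbol{\alpha}}_n(m,j-1)$, and split $v_0=v_0^{(1)}+v_0^{(2)}$ where $v_0^{(1)}$ collects exactly the terms whose monomials satisfy $\mathbf{k}^{neg}\cap I_{\boldsymbol{\alpha}}=J_0$; the remainder $v_0^{(2)}$ automatically lies in $L^{\boldsymbol{\alpha}}_n(m,j-1)$. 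The key combinatorial observation is that no monomial appearing in $w$ can have $\mathbf{k}^{neg}\cap I_{\boldsymbol{\alpha}}=J_0$, because $J_0$ is not contained in any size-$j$ subset different from itself. Comparing coefficients of monomials with that signature on the two sides of $v_0^{(1)}=w+\ell-v_0^{(2)}$ forces $v_0^{(1)}=0$, so $v_0\in L^{\boldsymbol{\alpha}}_n(m,j-1)$ and its class in the quotient vanishes.

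The main obstacle I anticipate is in part (1): verifying by a short case-by-case analysis that every canonical generator from Theorems \ref{V-nonzero-alpha}, \ref{V-cyclic-plus}, and \ref{V-cyclic-minus} shares the same $\mathbf{k}^{neg}\cap I_{\boldsymbol{\alpha}}$-signature as an arbitrary monomial in $V^{\boldsymbol{\alpha}}_n(m,J)\setminus L^{\boldsymbol{\alpha}}_n(m,j-1)$, so that Remark \ref{rmk-gen} indeed transfers the cyclic-generator property to an arbitrary extracted term. Once that uniform verification is in hand, both assertions reduce to clean filtered-module arguments.
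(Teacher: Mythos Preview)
Your proposal is correct and follows essentially the same route as the paper: both parts hinge on Lemma~\ref{different-weight} to extract a monomial term and on Remark~\ref{rmk-gen} (together with the explicit generators of Theorems~\ref{V-nonzero-alpha}--\ref{V-cyclic-minus}) to see that any monomial with signature exactly $J$ already generates $V^{\boldsymbol{\alpha}}_n(m,J)$. The case check you flag as the main obstacle is indeed routine---each canonical generator visibly has $\mathbf{p}^{neg}\cap I_{\boldsymbol{\alpha}}=J$---and your directness argument in (2), comparing one summand against the sum of the others via monomial signatures, is if anything slightly more complete than the paper's pairwise-intersection version of the same observation.
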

\begin{proof}
For Statement (1), for any nonzero element 
$\bar{f} \in W^{\boldsymbol{\alpha}}_n(m,J)$, 
we want to show that $\langle \bar{f} \rangle=W^{\boldsymbol{\alpha}}_{n}(m,J)$.
From the definition of $W^{\boldsymbol{\alpha}}_{n}(m,J)$, we can assume that   
\[
\bar{f}= f + L^{\boldsymbol{\alpha}}_{n}(m,j-1)
\] 
where $f = \sum_{i=1}^r c_{i} \mathbf{x}^{\mathbf{k}_{i}}
 \in V^{\boldsymbol{\alpha}}_n(m,J)$
having distinct monomials $\mathbf{x}^{\mathbf{k}_{i}}$
 in $V^{\boldsymbol{\alpha}}_n(m,J)$ 
with $\mathbf{k}_{i}^{neg} \cap I_{\boldsymbol{\alpha}}=J$.
From Lemma \ref{different-weight}, $\langle f \rangle$ includes 
the cyclic modules $\langle \mathbf{x}^{\mathbf{k}_{i}} \rangle$. 
On the other hand, by Theorem \ref{V-nonzero-alpha}, Theorem \ref{V-cyclic-plus}, 
Theorem \ref{V-cyclic-minus}, and Remark \ref{rmk-gen}, 
each $\mathbf{x}^{\mathbf{k}_{i}}$ generates 
the module $V^{\boldsymbol{\alpha}}_n(m,J)$. This shows 
that $\langle \bar{f} \rangle = W^{\boldsymbol{\alpha}}_{n}(m,J)$. 

For Statement (2), with $\eqref{basic-1}$ we see that
\begin{align*}
L^{\boldsymbol{\alpha}}_n(m,j) / L^{\boldsymbol{\alpha}}_n(m,j-1) 
&= \left(\sum_{J:|J|=j} V^{\boldsymbol{\alpha}}_{n} (m,J) \right) / L^{\boldsymbol{\alpha}}_n(m,j-1) \\
&=\left(\sum_{J:|J|=j} V^{\boldsymbol{\alpha}}_{n} (m,J)
+L^{\boldsymbol{\alpha}}_n(m,j-1)\right) / L^{\boldsymbol{\alpha}}_n(m,j-1). 
\end{align*}
Therefore, we have 
\[
L^{\boldsymbol{\alpha}}_n(m,j) / L^{\boldsymbol{\alpha}}_n(m,j-1) 
= \sum_{J:|J|=j} W^{\boldsymbol{\alpha}}_n(m,J)
\]
where the summation is over $J \subset I_{\boldsymbol{\alpha}}$ with $|J|=j$.
Now, suppose we have
\[
\bar{f} \in W^{\boldsymbol{\alpha}}_n(m,J_{1})
 \cap W^{\boldsymbol{\alpha}}_n(m,J_{2})
\]
with distinct subsets $J_{1}$ and $J_{2}$ of $I_{\boldsymbol{\alpha}}$. 
Then, we can assume that $\bar{f}= f + L^{\boldsymbol{\alpha}}_{n}(m,j-1)$ 
where $f = \sum_{i=1}^r c_{i} \mathbf{x}^{\mathbf{k}_{i}}$ 
with distinct monomials $\mathbf{x}^{\mathbf{k}_{i}}$ such that 
$\mathbf{k}_{i}^{neg} \cap I_{\boldsymbol{\alpha}} \subseteq J_1 \cap J_2$ 
for all $i$. Since $|J_1 \cap J_2| < j$, this shows that 
$f \in L^{\boldsymbol{\alpha}}_n(m,j-1)$ and therefore 
$\bar{f}$ is zero in the quotient 
$L^{\boldsymbol{\alpha}}_n(m,j) / L^{\boldsymbol{\alpha}}_n(m,j-1)$.
Therefore, we obtain the direct sum expression in the statement.
\end{proof}

\medskip

Next we investigate the cases when 
$W^{\boldsymbol{\alpha}}_{n}(m,J)$ are highest weight modules.

\begin{theorem}
[Highest weight vector in $W^{\boldsymbol{\alpha}}_{n}(m,J)$] \label{HW-mod}
\ 
\begin{enumerate}
\item
For an integer $1\leq \ell \leq n$, if 
$\boldsymbol{\alpha}\in \mathbb{C}^{n}$ is such that $\boldsymbol{\alpha}[\ell]=c$ 
is nonzero and $\boldsymbol{\alpha}[\ell']=0$ for all $\ell' \ne \ell$ and 
$J=\{1, 2, ..., \ell-1\} \subseteq I_{\boldsymbol{\alpha}}$, then 
for every $m \in \mathbb{Z}$ the  module 
$W^{\boldsymbol{\alpha}}_{n}(m,J)$ is a highest 
weight module having a highest weight vector
\[
(x_{1}^{-1} x_{2}^{-1} \cdots x_{\ell-1}^{-1} x_{\ell}^{m+\ell-1})
    + L^{\boldsymbol{\alpha}}_{n}(m,\ell-2)
\]
with highest weight
\[
(-1,-1, ...,-1,m+\ell-1+c,0, ...,0).
\]

\item
Let $\boldsymbol{\alpha}=\mathbf{0}$ and therefore $I_{\boldsymbol{\alpha}}=\{ 1,2,...,n\}$. 
For $1\leq \ell \leq n$, if $J=\{1,2, ... ,\ell-1\}$ then  
for $m \in \mathbb{Z}$ such that $m+\ell-1 \geq 0$ the  module 
$W^{\mathbf{0}}_{n}(m,J)$ is a highest weight module having a highest weight vector
\[
(x_{1}^{-1} x_{2}^{-1}\cdots x_{\ell-1}^{-1} x_{\ell}^{m+\ell-1})
  + L^{\mathbf{0}}_{n}(m,\ell-2)
\]
with highest weight
\[
(-1,-1, ... ,-1,m+\ell-1,0, ... ,0).
\]

In particular, if $\ell=1$ and $J=\varnothing$ then for $m \geq 0$, the module
$W^{\mathbf{0}}_{n}(m,\varnothing)$ is a $(m+1)$-dimensional module 
with highest weight $(m, 0, ..., 0)$.

\item
Let $\boldsymbol{\alpha}=\mathbf{0}$ and therefore $I_{\boldsymbol{\alpha}}=\{ 1,2,...,n\}$. 
For  $1\leq \ell \leq n$, if $J=\{1,2,\cdots,\ell\}$ then  
for $m \in \mathbb{Z}$ such that $m+\ell-1 <0$ the module $W^{\mathbf{0}}_{n}(m,J)$ 
is a highest weight module having a highest weight vector
\[
(x_{1}^{-1} x_{2}^{-1}\cdots x_{\ell-1}^{-1} x_{\ell}^{m+\ell-1} )
 + L^{\mathbf{0}}_{n}(m,\ell-1)
\]
with highest weight
\[
(-1,-1, ...,-1,m+\ell-1,0, ...,0).
\]
In particular, if $\ell=n$ and $J=\{1, 2,...,n\}$, then for $m \leq -n$
the module $W^{\mathbf{0}}_{n}(m,\{1, 2, ...,n\})$ is a finite dimensional 
module with highest weight 
\[(-1, -1,..., -1, m+n-1).\]
\end{enumerate}
\end{theorem}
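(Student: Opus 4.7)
The plan is to verify, for the proposed monomial
$v = x_{1}^{-1} x_{2}^{-1} \cdots x_{\ell-1}^{-1} x_{\ell}^{m+\ell-1}$
with exponent vector $\mathbf{k}$, that the class
$\bar v = v + L^{\boldsymbol{\alpha}}_{n}(m,j-1)$ is a nonzero highest weight vector of
$W^{\boldsymbol{\alpha}}_n(m,J)$ with respect to the Borel of upper triangular matrices.
Three things need checking: that $\bar v$ has the stated weight under the Cartan
$\mathfrak{h}=\langle E_{11},\ldots,E_{nn}\rangle$, that $\bar v$ is nonzero in
$W^{\boldsymbol{\alpha}}_n(m,J)$, and that every raising operator $E_{ab}$ with $a<b$
carries $v$ into $L^{\boldsymbol{\alpha}}_{n}(m,j-1)$. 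The weight claim is immediate from
$E_{aa}\cdot v = (\mathbf{k}[a]+\boldsymbol{\alpha}[a])\,v$ and yields exactly the tuples
listed in each part. Nonvanishing holds because $|\mathbf{k}^{neg}\cap I_{\boldsymbol{\alpha}}| = j$
in each case (equal to $\ell-1$ in parts (1) and (2), since $\ell \notin I_{\boldsymbol{\alpha}}$
in (1) and $m+\ell-1\ge 0$ in (2), and equal to $\ell$ in part (3) where $m+\ell-1<0$ forces
$\ell \in \mathbf{k}^{neg}$), while every basis monomial of $L^{\boldsymbol{\alpha}}_{n}(m,j-1)$
has at most $j-1$ negative exponents inside $I_{\boldsymbol{\alpha}}$.

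The heart of the proof is the case analysis for $E_{ab}$ with $a<b$, organized by the
position of $b$ relative to $\ell$. When $b>\ell$, one has $\mathbf{k}[b]=0$, and the
hypothesis on $\boldsymbol{\alpha}$ in each part forces $\boldsymbol{\alpha}[b]=0$, so the
scalar $(\mathbf{k}[b]+\boldsymbol{\alpha}[b])$ vanishes and $E_{ab}\cdot v = 0$. When
$b\le \ell$, necessarily $a<\ell$, and $E_{ab}$ turns $\mathbf{k}[a]=-1$ into $0$ at a
position $a\in I_{\boldsymbol{\alpha}}$, thereby removing $a$ from
$\mathbf{k}^{neg}\cap I_{\boldsymbol{\alpha}}$. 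In parts (1) and (2) the index $\ell$
either lies outside $I_{\boldsymbol{\alpha}}$ (part (1)) or keeps a nonnegative exponent
after the action (part (2)), the boundary case $m+\ell-1=0$ in part (2) with $b=\ell$
simply killing the coefficient. In part (3), position $\ell$ was already negative and
stays negative under $E_{a\ell}$. In every sub-case the resulting monomial has at most
$j-1$ negatives in $I_{\boldsymbol{\alpha}}$, so lies in
$L^{\boldsymbol{\alpha}}_{n}(m,j-1)$ as required.

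With these facts in place, $\bar v$ is a nonzero highest weight vector with the claimed
weight, and the simplicity of $W^{\boldsymbol{\alpha}}_n(m,J)$ established in
Theorem \ref{W-subquot} forces $\langle \bar v\rangle = W^{\boldsymbol{\alpha}}_n(m,J)$,
exhibiting the module as a highest weight module. The finite dimensional subcases
correspond to dominant integral weights: $(m,0,\ldots,0)$ with $m\ge 0$ for $\ell=1$ in
(2), and $(-1,\ldots,-1,m+n-1)$ with $m\le -n$ for $\ell=n$ in (3); finite dimensionality
then follows from the standard classification of irreducible highest weight
$\mathfrak{gl}(n)$-modules, with the first case moreover identifiable via
Theorem \ref{L-indecomp-positive}. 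The main obstacle is the bookkeeping in the sub-case
$b=\ell$, where one must simultaneously track whether $\ell\in I_{\boldsymbol{\alpha}}$
and the sign of $m+\ell-1$ across the three parts; once this is organized the remaining
steps are essentially a direct appeal to the definitions.
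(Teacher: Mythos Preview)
Your proof is correct and follows essentially the same approach as the paper's: verify the weight directly, then do the same case split on $b>\ell$ versus $b\le\ell$ to show every $E_{ab}$ with $a<b$ sends $v$ into $L^{\boldsymbol{\alpha}}_{n}(m,j-1)$. The only cosmetic difference is that the paper invokes Theorems~\ref{V-nonzero-alpha}--\ref{V-cyclic-minus} to see that $\bar v$ generates $W^{\boldsymbol{\alpha}}_{n}(m,J)$, whereas you obtain the same conclusion from the simplicity in Theorem~\ref{W-subquot}; both routes are immediate.
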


\begin{proof}
We first notice that the given elements $\mathbf{x}^{\mathbf{k}}+L^{\boldsymbol{\alpha}}_{n}(m,j-1)$ 
generate $W^{\boldsymbol{\alpha}}_{n}(m,J)$ where $j=|J|$ 
(see Theorem \ref{V-nonzero-alpha}, Theorem \ref{V-cyclic-plus}, and Theorem \ref{V-cyclic-minus}).
It is straightforward to verify their weights under the action of the Cartan subalgebra of 
$\mathfrak{gl}(n)$ generated by $E_{aa}$ for $1 \leq a \leq n$. 
Therefore, now it is enough to show that 
\begin{align}\label{ann-hwv}
E_{ab} \cdot (\mathbf{x}^{\mathbf{k}} + L^{\boldsymbol{\alpha}}_{n}(m,j-1)) 
& \,= \, (\mathbf{k}[b]+\boldsymbol{\alpha}[b])  (x_a x_b^{-1}) \mathbf{x}^{\mathbf{k}}
      + L^{\boldsymbol{\alpha}}_{n}(m,j-1)  {\notag} \\
& \, =\,  L^{\boldsymbol{\alpha}}_{n}(m,j-1) 
\end{align}
in $W^{\boldsymbol{\alpha}}_{n}(m,J)$ for all $1\le a <b \le n$.

For Statement (1), if $a<b$ and $b\ge \ell+1$, then since 
$\mathbf{k}[b]=\boldsymbol{\alpha}[b]=0$ we have
\[
E_{ab} \cdot (x_{1}^{-1}\cdots x_{\ell-1}^{-1} x_{\ell}^{m+\ell-1}) \, = \,
 (0+0) (x^{1}_a x_b^{-1}) (x_{1}^{-1} \cdots x_{\ell -1}^{-1} x_{\ell}^{m+\ell-1}  ) \, = \, 0. 
\]
If $a<b$ and $b \leq  \ell$, then $a\le\ell-1$ and
\[
E_{ab} \cdot (x_{1}^{-1}\cdots x_{\ell-1}^{-1} x_{\ell}^{m+\ell-1}) \, = \, 
(\mathbf{k}[b]+\boldsymbol{\alpha}[b]) \, (x^{1}_a x_b^{-1}) ( x_{1}^{-1}   \cdots  x_{\ell -1}^{-1} x_{\ell}^{m+\ell-1} )                         
\]
where $\mathbf{k}[b]=-1$ and $\boldsymbol{\alpha}[b]=0$ if $b \leq \ell-1$; and 
$\mathbf{k}[b]= m + \ell -1$ and $\boldsymbol{\alpha}[b]=c$ if $b=\ell$. 
Writing $\mathbf{x}^{\mathbf{q}}$ for the monomial in the right hand side,
we see that  $\mathbf{q}[a]=0$ because $a \leq \ell -1$ and 
therefore $|\mathbf{q}^{neg} \cap I_{\boldsymbol{\alpha}}| < \ell -1$.
This shows that $\mathbf{x}^{\mathbf{q}} \in L^{\boldsymbol{\alpha}}_{n}(m,\ell-2)$ 
and therefore \eqref{ann-hwv} is true. 

For Statement (2), the first part can be shown similarly to the previous 
case. The second part with the conditions $\ell=1$ and $J=\varnothing$ 
follows directly from Definition \ref{def-W} with 
$L^{\mathbf{0}}_{n}(m,-1) = \{ 0\}$ and Theorem \ref{L-indecomp-positive} (2).

For Statement (3), if $a<b$ and $b\ge \ell+1$, then 
since $\mathbf{k}[b]=\boldsymbol{\alpha}[b]=0$ we have
\[
 E_{ab} \cdot (x_{1}^{-1}\cdots x_{\ell-1}^{-1} x_{\ell}^{m+\ell-1})  \,= \, 
 (0 + 0) (x^{1}_a x_b^{-1}) (x_{1}^{-1} \cdots  x_{\ell-1}^{-1} x_{\ell}^{m+\ell-1} x_b^{-1}) \,=\, 0. 
\]
If $a<b$ and $b \leq \ell$, then since $\boldsymbol{\alpha}[b]=0$ we have
\[
E_{ab} \cdot (x_{1}^{-1}\cdots x_{\ell-1}^{-1} x_{\ell}^{m+\ell-1}) 
 \, = \, \mathbf{k}[b] (x^{1}_a x_b^{-1}) ( x_{1}^{-1} \cdots  x_{\ell -1}^{-1} x_{\ell}^{m+\ell-1})
\]
where $\mathbf{k}[b]=-1$ if $b \leq \ell -1$ and  $\mathbf{k}[b]= m + \ell -1$ if $b=\ell$. 
Again, by denoting the monomial in the right hand side by $\mathbf{x}^{\mathbf{q}}$, 
 we see that  $\mathbf{q}[a]=0$ because $a\le\ell-1$, and therefore $|\mathbf{q}^{neg} \cap I_{\boldsymbol{\alpha}}| < \ell$.
This shows that $\mathbf{x}^{\mathbf{q}} \in L^{\boldsymbol{\alpha}}_{n}(m,\ell-1)$ and 
therefore \eqref{ann-hwv} is true.
\end{proof}

We note that the highest weights of $W_n^{\boldsymbol{\alpha}}(m,J)$ given in 
Theorem \ref{HW-mod} are integral dominant (see, for example, \cite[\S 3]{GW09}) only when 
\begin{enumerate}
\item[i)] $\boldsymbol{\alpha}=\mathbf{0}$, $J=\varnothing$, and $m \geq 0$;
\item[ii)] $\boldsymbol{\alpha}=\mathbf{0}$, $J=\{ 1,2,...,n\}$, and $m \leq -n$. 
\end{enumerate}
Indeed, one can easily check that these are the only cases when the modules 
$W_n^{\boldsymbol{\alpha}}(m,J)$ are finite dimensional.

\medskip


\bigskip


\begin{thebibliography}{Rao94}


\bibitem{BL87} D. J. Britten and F. W. Lemire, 
A classification of simple Lie modules having a $1$-dimensional weight space. 
Trans. Amer. Math. Soc. 299 (1987), no. 2, 683--697.

\bibitem{BG02} K. A. Brown and K. R. Goodearl, 
Lectures on algebraic quantum groups. 
Advanced Courses in Mathematics. CRM Barcelona. Birkh{\"a}user Verlag, Basel, 2002. 

\bibitem{Rao94} S. Eswara Rao,
Representations of Witt algebras. 
Publ. Res. Inst. Math. Sci. 30 (1994), no. 2, 191--201. 

\bibitem{Rao96} S. Eswara Rao, 
Irreducible representations of the Lie-algebra 
of the diffeomorphisms of a d-dimensional torus. 
J. Algebra 182 (1996), no. 2, 401--421.

\bibitem{GW09} R. Goodman and N. R. Wallach, 
Symmetry, representations, and invariants. 
Graduate Texts in Mathematics, 255. Springer, Dordrecht, 2009.

\bibitem{GZ11} X. Guo and K. Zhao, 
Irreducible weight modules over Witt algebras. 
Proc. Amer. Math. Soc. 139 (2011), no. 7, 2367--2373. 
\end{thebibliography}
\end{document}